\theoremstyle{plain}
\newtheorem{thm}{Theorem}[section]
\newtheorem{prop}[thm]{Proposition}
\newtheorem{lem}[thm]{Lemma}
\theoremstyle{definition}
\newtheorem{defn}[thm]{Definition}
\newtheorem{expl}[thm]{Example}
\newtheorem{rem}[thm]{Remark}
\newtheoremstyle{named}%
    {}{}{\itshape}{}{\bfseries}{.}{.5em}{\thmnote{#3}}
\theoremstyle{named}
\newtheorem*{namedtheorem}{Theorem}
\newcommand\bref[3][blue]{%
    \begingroup%
    \hypersetup{linkcolor=#1}%
    \hyperlink{#2}{#3}%
    \endgroup}
\newcommand{\sC}{\mathcal{C}}
\newcommand{\sG}{\mathcal{G}}
\newcommand{\mK}{\mathbb{K}}
\newcommand{\ord}{\mathrm{ord}\,}
\DeclareMathOperator{\Ind}{Ind}
\DeclareMathOperator{\spec}{Spec}
\DeclareMathOperator{\Gal}{Gal}
\DeclareMathOperator{\id}{id}
\begin{document}
 \title{On the generalisation of Roth's theorem}
\author{Paolo Dolce\and
  Francesco Zucconi}
\date{}
\maketitle
\makeatletter
\@starttoc{toc}
\makeatother

\abstract{We present two generalisations of Roth's approximation theorem on proper adelic curves, assuming some technical conditions on the  behavior of the logarithmic absolute values. We illustrate how tightening such assumptions makes our inequalities stronger. As special cases  we recover Corvaja's results  \cite{C} for fields admitting a product formula, and Vojta's ones  \cite{vo} for arithmetic function fields.}

\setcounter{section}{-1}

\section{Introduction}
\subsection{History}\label{hi}
The celebrated Roth's theorem proved in \cite{roth} asserts that the irrationality measure of a real algebraic number is $2$. An equivalent, but more detailed statement is the following:
\begin{thm}[Roth]
Let $\alpha$ be a real algebraic number and let $\varepsilon>0$ be a real number. Then there exists a real constant $C(\alpha,\varepsilon)>0$ such that for every pair of coprime integers $(p,q)$ with $q>C(\alpha,\varepsilon)$, it holds that:
$$
\left|\alpha-\frac{p}{q}\right|> q^{-2-\varepsilon}
$$
\end{thm}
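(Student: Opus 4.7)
The plan is to prove Roth's theorem by contradiction, following the classical ``polynomial method'' in many variables due to Roth, which refines the earlier Thue--Siegel--Dyson approach. Suppose there are infinitely many coprime pairs $(p,q)$ with $|\alpha-p/q|\le q^{-2-\varepsilon}$. After passing to a subsequence I select $m$ such pairs $(p_1,q_1),\ldots,(p_m,q_m)$ so that $\log q_j$ grows extremely rapidly with $j$, where $m$ is chosen large in terms of $\varepsilon$, the degree of $\alpha$, and its height. The goal is then to build an auxiliary polynomial $P(x_1,\ldots,x_m)\in\mZ[x_1,\ldots,x_m]$, of partial degree at most $r_j$ in $x_j$, which reconciles two opposite estimates at the rational point $(p_1/q_1,\ldots,p_m/q_m)$ only if a contradiction with the choice of $m$ arises.

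First I would invoke Siegel's lemma (a pigeonhole argument on the lattice of integer coefficients) to obtain a nonzero $P$ with integer coefficients of moderate size which vanishes at $(\alpha,\ldots,\alpha)$ to large \emph{weighted index}: for a suitable threshold $\theta>0$, every partial derivative $\partial^{i_1+\cdots+i_m}P/\partial x_1^{i_1}\cdots\partial x_m^{i_m}$ with $\sum_j i_j/r_j\le \theta m$ vanishes at $(\alpha,\ldots,\alpha)$. The weights $r_j$ are chosen so that the products $r_j\log q_j$ are (nearly) independent of $j$, which ensures that the subsequent estimates balance.

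The heart of the argument is a two-sided inequality for such a derivative evaluated at $(p_1/q_1,\ldots,p_m/q_m)$. Expanding $P$ in Taylor series around $(\alpha,\ldots,\alpha)$ and inserting the approximation hypothesis yields an upper bound that decays like $\prod_j q_j^{-r_j(2+\varepsilon)(\theta-o(1))}$. In the opposite direction, after dividing the derivative by the factorials $\prod_j i_j!$ and clearing the denominators $q_j^{r_j}$, one obtains a rational integer; if this integer is nonzero then its absolute value is at least $1$, producing a matching lower bound. Comparing the two estimates forces a contradiction as soon as $m$ is large enough relative to $\varepsilon$, at which point finiteness of good approximations follows.

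The main obstacle, and the deepest ingredient, is establishing that the integer obtained after clearing denominators is indeed nonzero for \emph{some} derivative of small weighted order, i.e.\ that the index of $P$ at the rational point $(p_1/q_1,\ldots,p_m/q_m)$ is not much larger than the target $\theta m$. This is precisely \emph{Roth's lemma}, which I would prove by induction on $m$: the inductive step relies on a generalised Wronskian construction, combined with Gauss's lemma and a delicate use of the rapid decay $r_1\gg r_2\gg\cdots\gg r_m$ to rule out unexpected factorisations. Once Roth's lemma is in place, optimising $m$, the weights $r_j$, and the threshold $\theta$ closes the argument. The role of the constant $C(\alpha,\varepsilon)$ is simply to absorb the finitely many approximations not controlled by the above asymptotics.
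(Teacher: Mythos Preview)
Your outline is a correct sketch of the classical proof of Roth's theorem, but note that the paper does not actually prove this statement directly: it is quoted in the historical introduction, and the classical case is recovered only as the specialisation of the paper's Theorem~A (or Corvaja's Theorem~\ref{CorCorv}) to $\mathbb Q$ with the counting measure on places.

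That said, the route the paper takes to the general result is genuinely different from yours in two respects. First, the auxiliary polynomial is not built via Siegel's lemma: following Corvaja, the paper constructs an \emph{interpolating determinant} $\delta$ as the determinant of a carefully chosen square submatrix of the matrix $A(X)$ of equation~(\ref{interpolation matrix}); this gives the degree and height bounds of Proposition~\ref{interpolationpoly} essentially for free. Second, and more importantly, the nonvanishing step is handled not by Roth's lemma and the generalised Wronskian induction you describe, but by the Esnault--Viehweg version of Dyson's lemma, which guarantees that $A(\beta)$ has maximal rank (Proposition~\ref{max_rank}) and hence that $\delta(\beta)\neq 0$. Your approach is more elementary and self-contained; the determinant/Dyson route pays off by transplanting without change to arbitrary fields with a product formula and, in the paper, to proper adelic curves---something the Wronskian argument does not do so cleanly.
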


The above statement can be  naturally generalised in different directions by considering a number field $K$ instead of $\mathbb Q$ and the simultaneous approximation of the elements $\alpha_1,\ldots,\alpha_n$ algebraic over  $K$ by an element of $K$ with respect to different valuations (see \cite[Chapter 7]{lang}). Actually, the statement proved in \cite[Chapter 7]{lang} holds for fields that are more general than  number fields.  At the moment we don't have an \emph{effective} version of Roth's theorem  (i.e. a bound for the constant $C(\alpha,\varepsilon)$, see \cite{C1}), but we have a \emph{quantitative}  version (i.e.  bounds, in terms of $\alpha$ and $\varepsilon$, on the number of good approximants  see  \cite{bom}, \cite{bom1}, \cite{ever}).

Let $k$ be a field of characteristic $0$ and let $\mathcal V_k$ be a  set in bijection with a family of absolute values of $k$. The bijection is denoted by $v\mapsto |\cdot|_v$, for any $v\in\mathcal V_k$ and we don't put any restriction on the cardinality of $\mathcal V_k$, therefore we recall that a sum  over an uncountable  set  is defined to be the supremum of the sums over all finite subsets. The couple  $(k,\mathcal V_k)$ \emph{satisfies the product formula} if  for any element $a\in k^\times$ the series $\sum_{v\in\mathcal V_k} \log|a|_v$ converges absolutely and moreover $\sum_{v\in\mathcal V_k} \log|a|_v=0$. In this setting one also has a natural notion of \emph{logarithmic height} for $a\in k^\times$:
$$h(a):=\sum_{v\in\mathcal V_K}\log^+|a|_v\,,$$
and we set $H(a):=e^{h(a)}$.  Roth's theorem was generalised by Corvaja in  \cite{C}   for any couple $(k,\mathcal V_k)$ satisfying the product formula:
\begin{thm}\cite[Corollaire 1]{C}.\label{CorCorv}
Assume that $(k,\mathcal V_k)$ satisfies the product formula. Let $\alpha_1,\ldots,\alpha_n$ be distinct elements algebraic on $k$, and let $|\cdot|_{v_1},\ldots,|\cdot|_{v_n}$ be distinct absolute values of $k$, with $v_1,\ldots, v_n\in \mathcal V_k$. For any $i=1,\ldots,n$ let's fix an appropriately  normalised extension of  $|\cdot|_{v_i}$ to $k(\alpha_i)$ (and by abuse of notation denote it with the same symbol $|\cdot|_{v_i}$). Then for any  $\varepsilon>0$ there exists a constant $C=C(\mathcal V_k,\alpha_1,\ldots,\alpha_n, v_1,\ldots, v_n,\varepsilon)>0$ such that for all $\beta\in k$ with $H(\beta)>C$ it holds that:
$$\sum^n_{i=1}\log|\alpha_i-\beta|_{v_i}>-(2+\varepsilon)h(\beta)\,.$$
\end{thm}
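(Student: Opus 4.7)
The plan is to argue by contradiction, following the classical scheme of Roth--Mahler--Ridout with the product formula on $(k,\mathcal{V}_k)$ playing the role of the arithmetic sum formula over $\mathbb{Q}$. Suppose there is a sequence $\{\beta^{(j)}\}_{j\ge 1}$ in $k$ with $H(\beta^{(j)})\to\infty$ and $\sum_{i=1}^n\log|\alpha_i-\beta^{(j)}|_{v_i}\le-(2+\varepsilon)h(\beta^{(j)})$ for every $j$. After replacing $k$ by the finite extension $k(\alpha_1,\dots,\alpha_n)$ with the $v_i$ extended in a way that preserves the product formula (using exactly the normalisations fixed in the statement), I may assume all $\alpha_i$ lie in the ground field. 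A pigeonhole argument on the approximation weights $\lambda_i^{(j)}:=-\log|\alpha_i-\beta^{(j)}|_{v_i}/h(\beta^{(j)})$ lets me pass to a subsequence along which $\lambda_i^{(j)}\to\lambda_i$ with $\sum_i\lambda_i\ge 2+\varepsilon$, and along which the heights $h_j:=h(\beta^{(j)})$ grow as rapidly as later stages of the argument require.

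The next step is the construction, via Siegel's lemma in the product-formula setting, of an auxiliary polynomial $P\in k[x_1,\dots,x_m]$ of multidegree $(r_1,\dots,r_m)$ balanced so that $r_jh_j$ is essentially constant in $j$, with $h(P)=O(r_1+\cdots+r_m)$, and such that $P$ has prescribed high index at each diagonal point $(\alpha_i,\dots,\alpha_i)$ for $i=1,\dots,n$. A linear-algebra count of monomials against vanishing conditions shows this is possible once $m$ is chosen large enough in terms of $n$ and $\varepsilon$. For a multi-index $\mathbf{t}$ with relative order $\sigma:=|\mathbf{t}|/\sum r_j$ small, combining this vanishing with the Taylor expansion of the derivative $D^{\mathbf{t}}P$ at $(\alpha_i,\dots,\alpha_i)$ and the approximation hypothesis yields
\[\log|D^{\mathbf{t}}P(\beta^{(1)},\dots,\beta^{(m)})|_{v_i}\le-(\lambda_i-\sigma-\tau_j)\sum_{j}r_jh_j,\]
with $\tau_j\to 0$ as $h_j\to\infty$.

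Summing these local upper bounds over $i=1,\dots,n$ and using the product formula $\sum_{v\in\mathcal{V}_k}\log|D^{\mathbf{t}}P(\beta^{(1)},\dots,\beta^{(m)})|_v=0$ together with the standard Weil-type estimate $\sum_{v\notin\{v_1,\dots,v_n\}}\log^+|D^{\mathbf{t}}P(\beta^{(1)},\dots,\beta^{(m)})|_v\le h(P)+O(\sum_j r_jh_j)$, I obtain an inequality of the shape $\sum_i\lambda_i\le 2+O(\sigma)+o(1)$, which contradicts $\sum_i\lambda_i\ge 2+\varepsilon$ once $\sigma$ is taken small and $h_1$ large. This conclusion is valid provided $D^{\mathbf{t}}P(\beta^{(1)},\dots,\beta^{(m)})\ne 0$ for some $\mathbf{t}$ of small relative order.

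The hard step, and the main obstacle, is precisely this non-vanishing assertion: one needs a Roth-type lemma guaranteeing that the index of $P$ at the rational point $(\beta^{(1)},\dots,\beta^{(m)})$ is small. In the product-formula setting this is established by induction on $m$ via Wronskians (or alternatively through a Dyson-lemma argument), and it forces the gaps between consecutive $h_j$ to be wide and the ratios $r_{j+1}/r_j$ correspondingly small, with explicit quantitative choices depending on $n$ and $\varepsilon$. Granting Roth's lemma in this adelic form, the contradiction above closes the argument and produces the threshold $C=C(\mathcal{V}_k,\alpha_1,\dots,\alpha_n,v_1,\dots,v_n,\varepsilon)$ asserted in the theorem.
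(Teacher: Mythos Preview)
Your outline follows the classical Roth--Schmidt scheme (Siegel's lemma for the auxiliary polynomial, a Wronskian-type Roth lemma for the non-vanishing at the approximant), and as a high-level strategy it is sound. The paper, however, takes a different route, and it is worth spelling out the contrast.

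In the paper, Theorem~\ref{CorCorv} is not proved directly: it is recovered as the special case of Theorem~A in which $\Omega=\mathcal{V}_k$ carries the counting measure. For such an adelic curve the $\mu$-equicontinuity condition of Definition~\ref{muequi} is vacuously satisfied (any finite $S$ is covered by singletons, and one may take $U=\varnothing$), so Theorem~A applies at once. The engine behind Theorem~A is Corvaja's interpolating polynomial $\delta=\det M(X_1,\dots,X_N)$ of Proposition~\ref{interpolationpoly}: this $\delta$ is an explicit minor of the matrix $A(X)$ built from the data $\alpha_1,\dots,\alpha_n$, and its non-vanishing at $\beta$ is supplied not by Roth's lemma but by the Esnault--Viehweg version of Dyson's lemma (Proposition~\ref{max_rank}). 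The integral estimates of Section~\ref{estimates} then play the role of your product-formula comparison, and the contradiction is closed via the column-bounding function $\theta$ of Theorem~\ref{teoremadue}.

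Thus the two approaches differ at both key steps. You construct $P$ by a pigeonhole/Siegel argument and secure $D^{\mathbf t}P(\beta^{(1)},\dots,\beta^{(m)})\neq 0$ via a Roth--Wronskian induction, which you rightly flag as the hard step; the paper (following \cite{C}) builds $\delta$ as a determinant whose non-vanishing and whose bounds on degree, index and height are read off directly from the matrix once Dyson's lemma guarantees maximal rank. The determinant route buys an essentially free non-vanishing statement and cleaner height control; your route is the more traditional one and would also go through, but it leaves the heaviest piece of the argument to be supplied separately.
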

Notice that there are no further assumptions on the field $k$, which might be for instance a function field; therefore, theorem \ref{CorCorv} is a unifying result, as well as a generalisation of the classical Roth's theorem. Moreover, in the same paper Corvaja obtained also a quantitative version of Theorem \ref{CorCorv} (see \cite[Corollaire 3.7]{C}). 
 
An \emph{arithmetic function field},  is a  finitely generated field over $\mathbb Q$. The ``geometrisation'' of these fields in terms of Arakelov geometry is wonderfully explained in \cite{Mor}. Recently,  Vojta in \cite{vo} proved a version of Roth's theorem for arithmetic function fields with a big polarisation. For obvious reasons this can be considered as a ``higher dimensional'' generalisation of Roth's theorem.  We  conclude this short historic overview by explaining the statement of Vojta's  result. A big polarisation of an arithmetic function field  $K$  consists of a couple $(X,\overline{\mathcal L})$ where:
\begin{itemize}
\item[$(i)$]$X\to \spec\mathbb Z$ is a normal arithmetic variety whose function field is isomorphic to $K$.
\item[$(ii)$] Denote with $d$ the relative dimension of $X$ over  $\spec\mathbb Z$; then $\overline{\mathcal L}=\{\overline{\mathscr L}_1,\ldots\overline{\mathscr L}_d\}$  is a collection of hermitian, arithmetically nef and big line bundles.
\end{itemize}
Now  fix an arithmetic function field  $K$ with a big polarisation; then we can define a geometric height function $h_{\overline{\mathcal L}}$ for a prime divisor $Y$ as the Arakelov intersection number of the hermitian  line bundles $\overline{\mathscr L}_1,\ldots,\overline{\mathscr L}_d$ restricted to $Y$. We can now define a non-archimedean absolute value on $K$, associated to $Y$:
$$\vert a\vert_Y:=e^{-h_{\overline{\mathcal L}}(Y)\ord_Y(a)}\quad\forall a\in K\,.$$
Moreover, for any closed point $p
\in X_{\mathbb C}=X\times_{\spec\mathbb Z}\spec \mathbb C$ that doesn't come from the base change of a divisor on $X$ we have the following archimedean absolute value:
$$|a|_p:=\sqrt{a(p)\overline {a(p)}}\quad\forall a\in K\,.$$
By putting all together, we get a set of absolute values $M_K$ which turns out to be a measure space with a measure that we denote with $\mu$. The notion of  product formula holds true for the couple $(K, M_K)$ in the following form:
$$\int_{M_K}\log \vert a\vert_\nu d\mu(\nu)=0\,,\quad\forall a\in K^\times$$
Moreover there is a notion of height for any element of $a\in K^\times$:
$$
h_{K}(a):=\int_{M_K} \log^+|a|_{v}\, d\mu(v)\,.
$$
We set $H_K(a):=e^{h_{K}(a)}$. One of the $4$ equivalent versions of Vojta's generalisation of Roth's theorem is given below.
\begin{thm}\cite[Theorem 4.5]{vo}\label{vojta}
Consider a couple $(K,M_K)$ where $K$ is an arithmetic function field with a big polarisation such that $\overline{\mathscr L}_1=\ldots=\overline{\mathscr L}_d$, and $M_K$ is a set of absolute values as explained before. Let $S$ be a subset of $M_K$ of finite measure and fix some distinct elements $\alpha_1,\dots ,\alpha_n\in  K$. Then for any $\varepsilon>0$ and any $c\in\mathbb R$ there exists a real constant $C>0$ (depending on the fixed data) such that for any $\beta\in  K$  with  $h_{K}(\beta)>C$   
it holds that:
\begin{equation}\label{our_roth_encore}
\int_{S}\,\min_{1\le i\le n}\left(\log^-\vert \beta-\alpha_i\vert_\omega\right) d\mu(\omega)>-(2+\varepsilon)h_K(\beta)+c
\end{equation}
\end{thm}
\subsection{Results in this paper}
The goal of this paper is to generalise Roth's theorem  in a wider framework which includes Corvaja's and Vojta's settings.

The theory of  adelic curves introduced by Chen and Moriwaki in \cite{Ch-Mo} provides such natural framework:  an adelic curve $\mathbb X$ consists of a field $\mathbb K$ of characteristic $0$ and a measure space $(\Omega,\mathcal A, \mu)$ endowed with a function  $\phi:\omega\mapsto \vert\cdot \vert_{\omega}$ that maps $\Omega$ into the set of places of $\mathbb K$ and such that $\omega\mapsto\log|a|_\omega$ is  in $L^1(\Omega,\mu)$ for any $a\in\mathbb K^\times$. On $\mathbb X$ we have a well defined notion of height $h_{\mathbb X}$, and moreover a ``product formula'' which is expressed as an integral over $\Omega$. The adelic curves satisfying this product formula are called \emph{proper}. The fields with a product formula of $\cite{C}$ are trivially proper adelic curves when the set of places is endowed with the counting measure. Moreover in \cite[Section 3]{vo} it is shown that arithmetic function fields can be endowed with  a structure of proper adelic curve.

In this paper we prove two generalisations of Roth's theorem for proper adelic curves. The main results will be stated in terms of some inequalities involving the integral of the measurable functions $\omega\mapsto \log^-|\beta-\alpha_i|_\omega$  where $\alpha_1,\ldots, \alpha_n\in\mathbb K$ are the elements we want to approximate by an approximant $\beta\in\mathbb K$. Unfortunately, the bare definition of proper adelic curves is too general to give any meaningful result in the direction of Roth's approximation theorem, since  the functions $\log^-|\beta-\alpha_i|$ can be in principle very ``wild''. Therefore, we have to impose some analytic conditions on such functions in order to get the desired Roth's theorems. We will see that these assumptions are not too artificial, in fact we recover Theorems \ref{CorCorv} and \ref{vojta} as special cases.

The first condition we impose  on our adelic curves is the \emph{strong $\mu$-equicontinuity} (see definition \ref{st_muequi}): roughly speaking it  means that for any set of finite measure $S\subset\Omega$ one requires for the functions $\omega\mapsto \log|\beta|_{\omega}$, to be ``equicontinuous'' almost everywhere  on $S$. Under such hypothesis we prove the following theorem:

\begin{namedtheorem}[\hypertarget{thm:(A)}{Theorem (A)}]
Let $\mathbb X=(\mathbb K,\Omega, \phi)$ be a proper adelic curve satisfying the strong $\mu$-equicontinuity condition. Let $S=S_1\sqcup S_2\sqcup\ldots\sqcup S_n\subseteq\Omega $ be the disjoint union of subsets of finite measure. Fix some distinct elements $\alpha_1,\dots ,\alpha_n\in\mathbb K$, then for any $\varepsilon>0$ there exists a real constant $C>0$ (depending on the fixed data) such that for any $\beta\in \mathbb K$ with $h_{\mathbb X}(\beta)>C$ 
it holds that:
\begin{equation}\label{our_roth}
\sum^n_{i=1}\int_{S_i}\log^-\vert \beta-\alpha_i\vert_\omega d\mu(\omega)>-(2+\varepsilon)h_\mathbb X(\beta)\,.
\end{equation}
\end{namedtheorem}
We will show that Theorem \ref{CorCorv} is a consequence of Theorem \bref{thm:main}{(A)}, but on the other hand Vojta's inequality is stronger and moreover it doesn't depend on the fixed partition of $S$. Nevertheless, we get a generalisation of Theorem \ref{vojta}  under the following hypotheses:
\begin{itemize}
\item We slightly relax the  strong $\mu$-equicontinuity condition and we allow the existence of arbitrary small sets where the equicontinuity fails (see Definition \ref{muequi}).
\item   We assume that  the functions $\omega\to \log^-|\beta|_\omega$ are \emph{uniformly integrable} while $\beta$ varies (see definition \ref{un_int})
\end{itemize}

Our second main theorem is then the following:

\begin{namedtheorem}[\hypertarget{thm:(B)}{Theorem (B)}]
Let $\mathbb X=(\mathbb K,\Omega, \phi)$ be a proper adelic curve satisfying the $\mu$-equicontinuity condition and the uniform integrability condition. Fix some distinct elements $\alpha_1,\dots ,\alpha_n\in\mathbb K$. Let $S$ be a measurable  subset of $\Omega$ of finite measure. Then for any $\varepsilon>0$ and any $c\in\mathbb R$ there exists a real constant $C>0$ (depending on the fixed data) such that for any $\beta\in \mathbb K$  with  $h_{\mathbb X}(\beta)>C$   
it holds that:
\begin{equation}\label{our_roth_encore}
\int_{S}\,\min_{1\le i\le n}\left(\log^-\vert \beta-\alpha_i\vert_\omega\right) d\mu(\omega)>-(2+\varepsilon)h_\mathbb X(\beta)+c
\end{equation}
\end{namedtheorem}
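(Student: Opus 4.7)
The plan is to reduce Theorem B to Theorem A by partitioning $S$ according to which $\alpha_i$ achieves the max at each place, thereby rewriting $\int_S \max_i \log^-|\beta-\alpha_i|_\omega \, d\mu(\omega)$ as a sum of the form handled by Theorem A. The delicate point is that the partition depends on $\beta$ while Theorem A's constant depends on the partition, so extracting a single constant uniform in $\beta$ is where the uniform integrability hypothesis must enter.

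For the reduction, for each $\beta \in \mathbb{K} \setminus \{\alpha_1, \ldots, \alpha_n\}$ I would introduce the measurable index function
\[
i(\omega, \beta) := \min\Bigl\{ i \in \{1, \ldots, n\} : \log^-|\beta - \alpha_i|_\omega = \max_{1 \le j \le n} \log^-|\beta - \alpha_j|_\omega \Bigr\}
\]
and the level sets $S_i(\beta) := \{\omega \in S : i(\omega, \beta) = i\}$. Measurability of $\omega \mapsto \log^-|\beta-\alpha_j|_\omega$, guaranteed by the adelic-curve structure, ensures these form a measurable partition of $S$, and by construction
\[
\int_S \max_{1 \le i \le n} \log^-|\beta - \alpha_i|_\omega \, d\mu(\omega) = \sum_{i=1}^n \int_{S_i(\beta)} \log^-|\beta - \alpha_i|_\omega \, d\mu(\omega).
\]
Since $\alpha_i \in \mathbb{K}$ the embeddings $\iota_{\omega,i}$ of Theorem A can be taken as the identity, and $\mu$-equicontinuity is assumed; Theorem A applied to the partition $\{S_i(\beta)\}$ would then supply a constant $C_\beta > 0$ and the Roth-type lower bound on the right-hand side of the above equality.

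It remains to promote $C_\beta$ to a single constant $C$ independent of $\beta$. Inspecting Theorem A, its constant should depend on the partition only through integrability estimates on $\omega \mapsto \log^-|\beta - \alpha_i|_\omega$ over $S_i(\beta)$. I would apply the uniform integrability hypothesis to the translated family $\{\omega \mapsto \log^-|\beta - \alpha|_\omega : \beta \in \mathbb{K}\}$, for each fixed $\alpha \in \mathbb{K}$ (this family is uniformly integrable by translating the hypothesis for $\{\log^-|\gamma|_\omega\}$ by $\alpha$): for any $\eta > 0$ there would be a truncation level $M_\eta > 0$ beyond which the $L^1$-tails of these functions are less than $\eta$, uniformly in $\beta$. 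Such uniform tail control would absorb the partition dependence of $C_\beta$ into a single constant $C$, valid for every $\beta$ with $h_{\mathbb X}(\beta) > C$.

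The hard part will be this last step: one must revisit the mechanics of the proof of Theorem A to verify that its constant's dependence on the partition factors through quantities controlled uniformly by the uniform integrability hypothesis. Without this verification, $\beta$-dependent partitions could in principle demand arbitrarily large constants, and the reduction through Theorem A would fail to deliver Theorem B in the claimed uniform form.
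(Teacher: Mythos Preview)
Your reduction identifies the right obstacle --- the partition $\{S_i(\beta)\}$ depends on $\beta$ --- but the proposed fix does not close the gap, and the paper takes a different route.

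The constant in Theorem~A is not built explicitly; it emerges from a contradiction argument in which a \emph{fixed} partition $S=S_1\sqcup\cdots\sqcup S_n$ is used to extract, from a violating sequence $\{\beta_k\}$, a limiting function $\theta$ that is column-bounding for a matrix $T(\beta)$ built from several $\beta^{(j)}$'s at once. The pieces $S_i$ enter structurally: the functions $\theta_k$ are defined piecewise on $S_i$, Lemma~\ref{unif_conv} is applied on each $S_i$ separately, and the column-bounding property (Definition~\ref{bounding}) requires one partition valid simultaneously for all columns $\beta^{(1)},\ldots,\beta^{(N)}$. If the partition moves with $\beta$, none of this goes through, and there is no ``integrability estimate'' one can read off from the proof whose uniform control would repair it. Your suggestion that uniform integrability bounds the $L^1$-tails of $\log^-|\beta-\alpha_i|$ is correct as a statement, but it does not by itself produce a single partition for which the min-over-$j$ inequality needed in Definition~\ref{bounding} holds.

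What the paper actually does is bypass Theorem~A entirely and argue directly by contradiction against Theorem~\ref{teoremadue}. The key input is Lemma~\ref{vo8.12} (Vojta's Lemma~8.12), whose proof uses both the $\mu$-equicontinuity and the uniform integrability conditions: assuming Theorem~B fails, it manufactures $\beta_1,\ldots,\beta_N$ with prescribed height gaps \emph{together with a single partition} $S=S_1\sqcup\cdots\sqcup S_n$ such that
\[
\sum_{i=1}^n\int_{S_i}\min_{1\le k\le N}\frac{-\log^-|\beta_k-\alpha_i|_\omega}{h(\beta_k)}\,d\mu(\omega)>2+\varepsilon+\frac{1}{h(\beta_1)}.
\]
The $\min_k$ on the inside is exactly what makes the piecewise function $\theta(\omega)=\min_k\frac{-\log^-|\beta_k-\alpha_i|_\omega}{L+h(\beta_k)}$ column-bounding for all $N$ columns of $T(\beta)$ at once, and Theorem~\ref{teoremadue} then yields the contradiction. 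So the uniform integrability is used, but inside the construction of this common partition (via Vojta's Lemmas~8.10--8.12), not as a post-hoc uniformisation of the constant in Theorem~A.
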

Finally, in  Example \ref{Qbar} we show that  the $\mu$-equicontinuity condition doesn't hold for all proper adelic curves; in our example we consider $\overline{\mathbb Q}$ endowed with a natural structure of adelic curve naturally inherited from $\mathbb Q$.

Our strategy consists  in weaving together the ideas of Corvaja and Vojta to get  rather elementary proofs which are independent from Arakelov geometry. On the other hand we don't have, at the moment, any examples of interesting proper adelic curve different from the ones already known. It is a much harder problem to find new concrete cases in which Roth's theorem holds. After all, a highly nontrivial  achievement of Vojta's work consists in showing that the $\mu$-equicontinuity   and the uniform integrability hold  for arithmetic function fields. He does this by using the geometry  of complex fibres at infinity  appearing in Arakelov geometry.

 The very coarse overview of the proofs is the following: Roth's theorem is about the simultaneous approximation of some distinct elements $\alpha_1,\ldots, \alpha_n$ that by simplicity (in fact it will be enough to consider this case) can be fixed in $\mathbb K$, with an element $\beta\in\mathbb K$. The existence of a very special ``interpolating polynomial'' $\delta$ for such elements (section \ref{aux}) allows us to write some  integral estimates  for  measurable functions  on $\Omega$  satisfying some technical properties related to the heights of the $\alpha_i$'s and $\beta$ (section \ref{estimates}). Then, assuming  that Roth's theorem is false  leads to the construction  of a measurable function $\theta:S\to\mathbb R_{\ge 0}$ that gives the desired contradiction on the integral estimates previously found. The crucial point of the proof consists in the construction of $\theta$, and this is exactly where we need the additional technical conditions on the adelic curve.

We finally mention that section \ref{introad} is a brief review of the theory of adelic curves introduced  in \cite{Ch-Mo}, and moreover that Appendix \ref{ap} sketches the construction of the interpolating polynomial employed in \cite{C}.

\paragraph{Acknowledgements.} Both the authors want to express their gratitude to \emph{Pietro Corvaja} for introducing them to the subject and for his enlightening explanations. They also want to thank \emph{Lorenzo Freddi} and \emph{Huyai Chen} for replying to several questions.

A special mention goes to the anonymous referee for their incredible peer-reviewing work with this article. Their deep understanding of the topic and their compelling comments  were extremely useful to fix some mistakes.

The first author was partially supported by the EPSRC grant EP/M024830/1 ``Symmetries and correspondences: intra-disciplinary developments and applications'', partially by the research grant ``Higher Arakelov geometry, Cryptography and Isogenies of Elliptic Curves'' conferred by the University of Udine, and partially by the Ben-Gurion university of the Negev.

The second author wants to thank his wife \emph{Elena}.

\section{Adelic curves}\label{introad}
We will use the following notations throughout the whole paper:

$$\log^+ x := \max\{0, \log x\}\,,\quad \log^- x := \min\{0, \log x\}\,;\quad\forall x\in \mathbb R_{>0}$$

In this section we closely follow \cite[Chapter 3]{Ch-Mo}. For simplicity we restrict to the characteristic $0$ case, but all the definitions work also in positive characteristic.
\begin{defn}\label{AdCu}
 Let $\mathbb K$ be a field of characteristic $0$,  let $M_{\mathbb K}$ be the set of all absolute values of $\mathbb K$ and let $\Omega=(\Omega,\mathcal A, \mu)$ be a  measure space endowed with  a map 
\begin{eqnarray*}
\phi\colon \Omega&\to& M_{\mathbb K}\\
\omega &\mapsto & |\cdot|_\omega:=\phi(\omega)\,.
\end{eqnarray*} 
 such that for any $a\in \mathbb K^\times$, the real valued function $\omega\mapsto \log |a|_\omega$ lies in  $L^1(\Omega,\mu)$.  The triple $\mathbb X=(\mathbb K,\Omega,\phi)$ is called an \emph{adelic curve}; $\Omega$ and $\phi$ are respectively the \emph{parameter space} and the  \emph{parametrization}.  We denote with $\Omega_{\infty}$ the subset of $\Omega$ made  of all elements $\omega$ such that $|\cdot|_{\omega}$ is archimedean. We set  $\Omega_0:=\Omega\setminus \Omega_{\infty}$.
\end{defn}
\begin{rem}
We also recall that a more general notion of adelic curve, with the requirement that $|\cdot|_\omega$ is an absolute value only almost everywhere for $\omega\in\Omega$, had been already given in  \cite{gu}  under the name of $M$-field. 
\end{rem}

It is easy to show that the set $\Omega_{\infty}$ is always measurable \cite[Proposition 3.1.1]{Ch-Mo}.

\begin{defn}
An adelic curve $\mathbb X=(\mathbb K,\Omega,\phi)$ is said to be \emph{proper}  if for any $a\in \mathbb K^\times$:

\begin{equation}\label{pr_for}
\int_{\Omega} \log |a|_\omega\, d\mu(\omega)=0\,.
\end{equation}
\end{defn}

Let's see  examples of adelic curves:

\begin{expl}\label{counting}
Any field $(k,\mathcal V_k)$ satisfying the product formula in the sense of \cite{C} is a proper adelic curve. In fact in this case $\Omega=\mathcal V_k$, $\phi$ is the identity and  $\mu$ is the counting measure.
\end{expl}

\begin{expl}
An arithmetic function field  $K$ with a big polarisation is a proper  adelic curve. A quick description of this has been already given in section \ref{hi}. For details see \cite[Section 3]{vo}. 
\end{expl}
\begin{expl} 
A polarised algebraic function field (in $d\ge 1$ variables) over a field of characteristic $0$ can be endowed with a structure of proper adelic curve. For details see  \cite[3.2.4]{Ch-Mo}. 
\end{expl}
In the remaining part of this section we study the behaviour of adelic curves with respect to field extensions. In particular  let's  fix an adelic curve $\mathbb X=(\mathbb K,\Omega_{\mathbb K},\phi_{\mathbb K})$, and let $\mathbb L$ be a \emph{finite} extension of $\mathbb K$, our goal is to endow it with a canonical structure of adelic curve  ``coming from $\mathbb K$''. In other words, we have to define a parameter space $\Omega_{\mathbb L}$ and a parametrization $\phi_{\mathbb L}$ in a canonical way by starting from $\Omega_{\mathbb K}$ and $\phi_{\mathbb K}$. For any $\omega\in\Omega_{\mathbb K}$ we denote with $M_{\mathbb L,\omega}$ the set of absolute values of $\mathbb L$ extending $|\cdot|_{\omega}$, so we put:
$$\Omega_{\mathbb L}:=\bigsqcup_{\omega\in\Omega_{\mathbb K}}M_{\mathbb L,\omega}$$
and we have a natural projection map $\pi_{\mathbb L|\mathbb K}:\Omega_{\mathbb L}\to \Omega_{\mathbb K}$ whose fibres are $M_{\mathbb L,\omega}$, for any $\omega$. The inclusion $M_{\mathbb L,\omega}\subset M_{\mathbb L}$ clearly induces a parametrization $\phi_{\mathbb L}\colon\Omega_{\mathbb L}\to M_{\mathbb L}$ and for any $\nu\in\Omega_{\mathbb L}$ we put $|\cdot|_\nu:=\phi_{\mathbb L}(\nu)$. We obtain the following commutative diagram:

\begin{equation}\label{sqdiag}
\begin{tikzcd}
 \Omega_{\mathbb L}\arrow[r,"\pi_{\mathbb L|\mathbb K}"] \arrow[d,"\phi_{\mathbb L}"] & \Omega_{\mathbb K}\arrow[d,"\phi_{\mathbb K}"]\\
M_{\mathbb L}\arrow[r] & M_{\mathbb K}
\end{tikzcd}
\end{equation}
where the bottom map is the restriction function. Note that $\Omega_{\mathbb L}$ can be abstractly defined as the fibered product in the category of sets. Now on $\Omega_{\mathbb L}$ we put the $\sigma$-algebra $\mathcal B$ generated by $\pi_{\mathbb L|\mathbb K}$ and all the real maps $\Omega_{\mathbb L}\ni\nu\mapsto|a|_{\nu}$, for any $a\in\mathbb L^\times$ (on $\mathbb R$ we put the standard Lebesgue measure). We want to define a suitable measure $\eta$ on the measurable space $(\Omega_{\mathbb L},\mathcal B)$. This requires a bit of work, since in general there is no straightforward definition of pullback measure through a measurable map. Nevertheless, in the  case of our $\pi_{\mathbb L|\mathbb K}$ we explain how it is possible to define the pullback $\eta=\pi^\ast_{\mathbb L|\mathbb K}\mu$ (actually this is a  construction from measure theory which works in full generality any time we have a measure fiberwise).  Consider a fiber $M_{\mathbb L,\omega}\subset\Omega_{\mathbb L}$, then for any $\nu\in M_{\mathbb L,\omega}$ we can put
\begin{equation}\label{prob}
P_\omega(\nu):=\frac{[\mathbb L_\nu:\mathbb K_\omega]}{[\mathbb L:\mathbb K]}
\end{equation}
where $\mathbb L_\nu$ and $\mathbb K_\omega$ denote the completions with respect to $|\cdot|_\nu$ and $|\cdot|_\omega$ respectively. Thanks to the well known equality $\sum_{\nu\in M_{\mathbb L,\omega}}[\mathbb L_\nu:\mathbb K_\omega]=[\mathbb L:\mathbb K]$ (See \cite[Ch. II, Corollary 8.4]{Neu}), we conclude that equation (\ref{prob}) induces a probability measure on the fibre $M_{\mathbb L,\omega}$, with respect to the power set. Now, for any function $f:\Omega_{\mathbb L}\to\mathbb R$, by using the fiberwise integral along each probabilised fiber $M_{\mathbb L,\omega}$, we define the map $I_{\mathbb L|\mathbb K}(f):\Omega_\mathbb K\to\mathbb R$ as:
$$I_{\mathbb L|\mathbb K}(f):\omega\mapsto \int_{M_{\mathbb L,\omega}} \!\!f\,dP_\omega=\!\sum_{\nu\in M_{\mathbb L,\omega}}\!P_\omega(\nu)f(\nu)$$
\begin{prop}\label{fiber_int}
The linear operator $I_{\mathbb L|\mathbb K}$ sends $\mathcal B$-measurable functions to $\mathcal A$-measurable functions.
\end{prop}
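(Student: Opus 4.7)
The plan is to use a standard monotone class argument. Let $\mathcal{H}$ denote the class of bounded $\mathcal{B}$-measurable functions $f : \Omega_{\mathbb{L}} \to \mathbb{R}$ such that $I_{\mathbb{L}|\mathbb{K}}(f)$ is $\mathcal{A}$-measurable. First I would verify that $\mathcal{H}$ is an $\mathbb{R}$-vector space containing the constant function $1$ and closed under pointwise monotone limits of uniformly bounded sequences. Linearity is immediate from the definition of $I_{\mathbb{L}|\mathbb{K}}$ as a finite weighted sum on each fiber $M_{\mathbb{L},\omega}$; closure under monotone limits follows by applying the monotone convergence theorem pointwise on each fiber, which is harmless because the fiber is finite (of cardinality at most $[\mathbb{L}:\mathbb{K}]$) and $P_\omega$ is a probability measure.

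By the monotone class theorem (functional version), it then suffices to show $\mathbf{1}_A \in \mathcal{H}$ for $A$ in a $\pi$-system generating $\mathcal{B}$. Since $\mathcal{B}$ is generated by $\pi_{\mathbb{L}|\mathbb{K}}^{-1}(\mathcal{A})$ together with the preimages $\{\nu : |a|_\nu \in B\}$ for $a \in \mathbb{L}^\times$ and Borel $B \subset \mathbb{R}$, a natural choice of $\pi$-system consists of finite intersections
$$A = \pi_{\mathbb{L}|\mathbb{K}}^{-1}(E) \cap \bigcap_{i=1}^k \{\nu \in \Omega_{\mathbb{L}} : |a_i|_\nu \in B_i\}.$$
For such $A$ one directly computes
$$I_{\mathbb{L}|\mathbb{K}}(\mathbf{1}_A)(\omega) = \mathbf{1}_E(\omega) \cdot \frac{1}{[\mathbb{L}:\mathbb{K}]} \sum_{\nu \in M_{\mathbb{L},\omega}} [\mathbb{L}_\nu : \mathbb{K}_\omega] \prod_{i=1}^k \mathbf{1}_{B_i}(|a_i|_\nu),$$
so the problem is reduced to showing that the second factor is an $\mathcal{A}$-measurable function of $\omega$.

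The main obstacle is exactly this last measurability claim, which is subtle because the fiber $M_{\mathbb{L},\omega}$ itself varies with $\omega$. I would handle it via a primitive element: choose $\alpha \in \mathbb{L}$ with $\mathbb{L} = \mathbb{K}(\alpha)$ and write each $a_i = Q_i(\alpha)$ with $Q_i \in \mathbb{K}[X]$; then invoke the classical correspondence between the places $\nu$ of $\mathbb{L}$ above $\omega$ and the monic irreducible factors of the minimal polynomial $P_\alpha \in \mathbb{K}[X]$ in $\mathbb{K}_\omega[X]$, under which $[\mathbb{L}_\nu : \mathbb{K}_\omega]$ equals the degree of the corresponding factor. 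Because the unique extension of $|\cdot|_\omega$ to $\overline{\mathbb{K}_\omega}$ is $\Gal(\overline{\mathbb{K}_\omega}|\mathbb{K}_\omega)$-invariant, the value $|Q_i(\beta)|$ depends only on the Galois orbit of a root $\beta$ of $P_\alpha$, so the fiber sum rewrites as the symmetric expression
$$\frac{1}{[\mathbb{L}:\mathbb{K}]} \sum_{\beta \in \overline{\mathbb{K}_\omega},\ P_\alpha(\beta)=0} \prod_{i=1}^k \mathbf{1}_{B_i}(|Q_i(\beta)|_\omega).$$
Its $\mathcal{A}$-measurability in $\omega$ then follows by a standard resultant / elementary-symmetric-function argument, since the coefficients of $P_\alpha$ and of the $Q_i$ lie in $\mathbb{K}$ and hence have $\mathcal{A}$-measurable absolute values on $\Omega_{\mathbb{K}}$ by the defining axiom of the adelic curve $\mathbb{X}$. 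The passage from indicators to all bounded simple $\mathcal{B}$-measurable functions is by linearity, and the extension from bounded to arbitrary $\mathcal{B}$-measurable $f$ is by the decomposition $f = f^+ - f^-$ together with monotone approximation by simple functions.
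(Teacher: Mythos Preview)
The paper does not prove this statement itself; it simply cites \cite[Theorem 3.3.4]{Ch-Mo}. Your outline is essentially the strategy followed there: a functional monotone-class reduction to characteristic functions of a generating $\pi$-system, then a primitive-element rewriting of the fiber sum as a symmetric sum over the roots of the minimal polynomial $P_\alpha$ in $\overline{\mathbb{K}_\omega}$. Your identification of the fiber sum with the root sum is correct, as is the observation that $|Q_i(\beta)|_\omega$ is constant on $\Gal(\overline{\mathbb{K}_\omega}|\mathbb{K}_\omega)$-orbits.

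The one place to flag is your final sentence, where measurability ``follows by a standard resultant / elementary-symmetric-function argument.'' This is where the actual content lives, and it is less routine than you suggest. Elementary symmetric functions of the $Q_i(\beta)$ do lie in $\mathbb{K}$ and hence have $\mathcal{A}$-measurable absolute values, but what you need is the $\mathcal{A}$-measurability of the counting function of the \emph{joint} multiset $\{(|Q_1(\beta)|_\omega,\ldots,|Q_k(\beta)|_\omega):\beta\}$, and passing from measurability of the absolute values of symmetric polynomials in the $Q_i(\beta)$ to measurability of such counting functions of the $|Q_i(\beta)|_\omega$ is precisely where Chen--Moriwaki spend their effort (several pages, with separate handling of the archimedean and non-archimedean parts of $\Omega_{\mathbb{K}}$). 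Apart from this under-emphasis of the hard step, your plan is correct and coincides with the cited proof.
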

\proof
See \cite[Theorem 3.3.4]{Ch-Mo}.
\endproof
At this point we are ready to define the measure $\eta$. For any $E\in\mathcal B$ we put:
\begin{equation}\label{pullback_int}
\eta(E):=\int_{\Omega_\mathbb K} I_{\mathbb L|\mathbb K}(\chi_{E})\, d\mu
\end{equation}
where $\chi_{E}$ is the characteristic function of $E$. Note that the integral of equation (\ref{pullback_int}) makes sense because of Proposition \ref{fiber_int}. 

\begin{thm}\label{pullback_mea}
The following statements hold:
\begin{itemize}
\item[$(1)$] The map $\eta$ defined above is a measure on $(\Omega_{\mathbb L},\mathcal B)$ such that for any $\mathcal B$-measurable function $f$ we have
$$\int_{\Omega_\mathbb L}fd\,\eta=\int_{\Omega_\mathbb K}I_{\mathbb L|\mathbb K}(f)\, d\mu\,.$$
\item[$(2)$] $f\in L^1(\eta)$ if and only if $I_{\mathbb L|\mathbb K}(|f|)\in L^1(\mu)$.
\item[$(3)$] The pushforward measure of $\eta$ through $\pi_{\mathbb L|\mathbb K}$ is $\mu$.
\item[$(4)$] With the above constructions the triple $\mathbb Y=(\mathbb L,\Omega_{\mathbb L},\phi_{\mathbb L})$ is an adelic curve. Moreover for any $b\in\mathbb L^\times$
\begin{equation}\label{extproper}
[\mathbb L:\mathbb K]\int_{\Omega_\mathbb L}\log|b|_\nu d\eta(\nu)=\int_{\Omega_\mathbb K}\log|N_{\mathbb L|\mathbb K}(b)|_\omega d\mu(\omega)
\end{equation}
and in particular if $\mathbb X$ is proper, then also $\mathbb Y$ is proper.

\end{itemize}
\end{thm}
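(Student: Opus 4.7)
The plan is the standard measure-theoretic bootstrap from characteristic functions up to integrable functions, with the norm identity reserved for the last step. For (1), countable additivity of $\eta$ follows from the pointwise identity $\chi_{\bigsqcup_n E_n}=\sum_n\chi_{E_n}$: on each finite fibre $M_{\mathbb L,\omega}$ this gives $I_{\mathbb L|\mathbb K}(\chi_{\bigsqcup_n E_n})(\omega)=\sum_n I_{\mathbb L|\mathbb K}(\chi_{E_n})(\omega)$ by the very definition of $I_{\mathbb L|\mathbb K}$, and monotone convergence against $\mu$ then yields $\eta(\bigsqcup_n E_n)=\sum_n \eta(E_n)$. The integration identity itself is established in four standard steps: it holds on characteristic functions by the definition of $\eta$, extends by linearity to non-negative simple functions, by monotone convergence (trivial on the finite fibres, and then genuine on $\Omega_{\mathbb K}$) to non-negative $\mathcal B$-measurable functions, and finally to general $\mathcal B$-measurable $f$ via $f=f^+-f^-$. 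Part (2) is then immediate by applying (1) to $|f|\ge 0$, and (3) follows from the computation $I_{\mathbb L|\mathbb K}(\chi_{\pi_{\mathbb L|\mathbb K}^{-1}(A)})(\omega)=\chi_A(\omega)\sum_{\nu\in M_{\mathbb L,\omega}}P_\omega(\nu)=\chi_A(\omega)$, where the middle equality is the fact that $P_\omega$ is a probability measure, which is $\sum_\nu[\mathbb L_\nu:\mathbb K_\omega]=[\mathbb L:\mathbb K]$.

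For (4), fix $b\in\mathbb L^\times$. Measurability of $\nu\mapsto\log|b|_\nu$ is built into the definition of $\mathcal B$, and the classical identity $|N_{\mathbb L|\mathbb K}(b)|_\omega=\prod_{\nu|\omega}|b|_\nu^{[\mathbb L_\nu:\mathbb K_\omega]}$ (\cite[Cor.~8.4]{Neu}), after taking logarithms, reads
\[
I_{\mathbb L|\mathbb K}(\log|b|_\nu)(\omega)=\frac{1}{[\mathbb L:\mathbb K]}\log|N_{\mathbb L|\mathbb K}(b)|_\omega.
\]
Combined with part (1), this is exactly equation (\ref{extproper}) after multiplying by $[\mathbb L:\mathbb K]$, and properness of $\mathbb Y$ is then automatic: $N_{\mathbb L|\mathbb K}(b)\in\mathbb K^\times$ forces the right-hand side to vanish when $\mathbb X$ is proper.

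The main obstacle I foresee is verifying that $\log|b|_\nu\in L^1(\eta)$ in the first place, so that $\mathbb Y$ actually qualifies as an adelic curve; the displayed formula controls only the signed integral. By (2), this reduces to $I_{\mathbb L|\mathbb K}(|\log|b|_\nu|)\in L^1(\mu)$, and since $|\log|b|_\nu|=\log^+|b|_\nu+\log^+|b^{-1}|_\nu$ it suffices to bound the fibrewise average of $\log^+|b|_\nu$ (and symmetrically of $\log^+|b^{-1}|_\nu$) by a $\mu$-integrable function. Unfolding the fibre sum into a sum over $\mathbb K$-embeddings $\sigma\colon\mathbb L\hookrightarrow\overline{\mathbb K}_\omega$ and applying the standard root-size estimate to the minimal polynomial $f_b(X)=X^d+a_{d-1}X^{d-1}+\cdots+a_0\in\mathbb K[X]$, one obtains $\log^+|\sigma(b)|_\omega\le\sum_i\log^+|a_i|_\omega+c_\omega$, with $c_\omega=0$ at non-archimedean $\omega$ and $c_\omega$ a dimensional constant on $\Omega_\infty$. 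Since $\log^+|a_i|_\omega\in L^1(\mu)$ for each $a_i\in\mathbb K$, and $\mu(\Omega_\infty)<\infty$ (a consequence of applying the $L^1$-axiom of $\mathbb X$ to a rational integer $>1$), the bound is $\mu$-integrable and the required integrability follows, completing the proof that $\mathbb Y$ is an adelic curve.
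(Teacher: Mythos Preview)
Your proof is correct and follows the standard approach; the paper itself does not give a proof but simply cites \cite[Theorem 3.3.7]{Ch-Mo}, and the argument there runs along essentially the same lines you have written (monotone-class bootstrap for (1)--(3), the norm formula together with a root-size bound for the $L^1$ claim in (4)).

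One small imprecision is worth flagging: you invoke $\mu(\Omega_\infty)<\infty$ as a consequence of the $L^1$-axiom applied to a rational integer $>1$. What that axiom actually yields is $\int_{\Omega_\infty}\varepsilon(\omega)\,d\mu<\infty$, since $\log|2|_\omega=\varepsilon(\omega)\log 2$; in the generality of Definition~\ref{AdCu} the exponent $\varepsilon(\omega)\in(0,1]$ need not be bounded away from $0$, so $\mu(\Omega_\infty)$ may well be infinite. This does not damage your argument, because the archimedean constant in the root bound is naturally $c_\omega=\varepsilon(\omega)\cdot C$ rather than a constant independent of $\omega$ (apply the usual Cauchy-type bound to $|\cdot|_\omega^{1/\varepsilon(\omega)}$ and then raise to the power $\varepsilon(\omega)$), and $\int_{\Omega_\infty}\varepsilon(\omega)\,d\mu<\infty$ is exactly what is needed to integrate it.
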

\proof
See \cite[Theorem 3.3.7]{Ch-Mo}.
\endproof
At this point we study  \emph{algebraic} extensions of adelic curves. Let's fix the adelic curve $\mathbb X=(\mathbb K,\Omega_K,\phi_K)$ and let $\mathbb L$ an algebraic extension of $\mathbb K$. We denote with $\mathcal F_{\mathbb L|\mathbb K}$ the family of finite field extensions on $\mathbb K$ contained in $\mathbb L$. Clearly $\mathcal F_{\mathbb L|\mathbb K}$ is a directed set with respect to the inclusion, and for any $\mathbb K',\mathbb K''\in\mathcal F_{\mathbb L|\mathbb K}$  such that  $\mathbb K'\subseteq\mathbb K''$ we have a morphism of measurable spaces 
$$\pi_{\mathbb K''|\mathbb K'}\colon (\Omega_{\mathbb K''}, \mathcal B'')\to (\Omega_{\mathbb K'}, \mathcal B')$$
and an operator $I_{\mathbb K''|\mathbb K'}$ sending integrable functions to integrable functions as described above in the case of finite extensions. In other words we obtain an inverse system of measure spaces, and we would like to define the adelic structure on $\mathbb L$ as ``a projective limit''. Unfortunately, in the category of measure spaces we don't have a straightforward notion of projective limit, therefore we need again a bit of extra work.  We can define respectively $M_{\mathbb L,\omega}$, $\Omega_{\mathbb L}$ and $\phi_{\mathbb L}\colon \Omega_{\mathbb L}\to M_{\mathbb L}$ exactly as we did before in the case of finite extensions, but we need to construct an adequate structure of measure space on $\Omega_{\mathbb L}$. For any $K'\in\mathcal F_{\mathbb L|\mathbb K}$ we have  a map $\pi_{\mathbb L|\mathbb K'}:\Omega_{\mathbb L}\to \Omega_{\mathbb K'}$ and a square diagram like (\ref{sqdiag}).  It turns out that $\pi_{\mathbb L|\mathbb K'}$ is surjective \cite[Proposition 3.4.5]{Ch-Mo}. We endow $\Omega_{\mathbb L}$ with the $\sigma$-algebra $\Sigma$ generated by the maps $\{\pi_{\mathbb L|\mathbb K'}\}_{\mathbb K'\in \mathcal F_{\mathbb L|\mathbb K}}$, and it can be shown that $(\Omega_{\mathbb L},\Sigma)$ is the projective limit of the inverse system $\{(\Omega_{\mathbb K'},\mathcal B')\}_{\mathbb K'\in \mathcal F_{\mathbb L|\mathbb K}}$ in the category of measurable spaces. It remains the issue of putting a canonical measure $\lambda$ on $(\Omega_{\mathbb L},\Sigma)$. This process is quite technical, but similarly to the case of finite extensions, it can be done by using a fiberwise integration on each $M_{L,\omega}$; all the details are given in \cite[3.4.2]{Ch-Mo}. What we really need is the fact that we can construct an adelic curve $(\mathbb L,\Omega_{\mathbb L},\lambda)$ which is proper if $\mathbb X$ is proper and such that for any $f\in L^1(\mu)$ we have that $f\circ \pi_{\mathbb L|\mathbb K}\in L^1(\lambda)$ with
\begin{equation}\label{int_ext}
\int_{\Omega_{\mathbb L}}(f\circ \pi_{\mathbb L|\mathbb K})d\lambda=\int_{\Omega_{\mathbb K}} fd\mu\,.
\end{equation}
Below we give the notion of height for proper adelic curves:
\begin{defn}\label{height}
Let $\mathbb X=(\mathbb K, \Omega,\phi)$ be a proper adelic curve and let $\overline {\mathbb K}$ be an algebraic closure of $\mathbb K$. Then we have  a proper adelic curve $\overline{\mathbb X}=(\overline {\mathbb K},\overline{\Omega},\overline\phi)$ and the \emph{(naive) height} of an element $a\in\overline {\mathbb K}^\times$ is defined as:
$$
h_{\mathbb X}(a):=\int_{\overline{\Omega}} \log^+|a|_{\nu}\, d\chi(\nu)\,.
$$
where $\nu$ denotes a generic element of $\overline \Omega$ and $\chi$ is the measure on $\overline \Omega$. Moreover we set $H_{\mathbb X}:=e^{h_{\mathbb X}}$. 
\end{defn}
From now on we always assume that for an adelic curve $\mathbb X=(\mathbb K, \Omega,\phi)$ we have fixed algebraic closure of $\mathbb K$, therefore also $\overline{\mathbb X}$ is fixed and we use the same notations of Definition \ref{height}.
If for $\nu\in\overline\Omega$, $\vert\cdot\vert_\nu$ is an archimedean absolute value, then by Ostrowski's theorem we know that there exists a real number $\varepsilon(\nu)\in\,]0,1]$ such that $\vert\cdot\vert_\nu=\vert\cdot\vert^{\varepsilon(\nu)}$ where on the right we mean the standard euclidean absolute value on $\mathbb R$ or $\mathbb C$. Thus we have a map $\varepsilon:\overline\Omega_{\infty}\to ]0,1]$ which can be extended to  $\varepsilon:\overline\Omega\to [0,1]$ by putting $\varepsilon_{|\overline\Omega_0}:=0$. For instance,  for an archimedean $\vert\cdot\vert_\nu$ we have $\log \vert 2\vert_\nu=\varepsilon(\nu)\log 2$, therefore we obtain the explicit expression of the function  $\varepsilon$ on the whole $\overline\Omega$:
$$\varepsilon(\nu)=\frac{\log^+\vert2\vert_\nu}{\log 2}\,.$$
Clearly $\varepsilon(\nu)$ is a measurable function. We can always take a  scaling  $\mu'$ of the measure $\mu$ on $\Omega$
so that  get a new height $h'_{\mathbb X}$ that  satisfies $h'_{\mathbb X}(2)\le\log 2$.  Notice that if $\mathbb X$ is proper, then it remains proper after any scaling of the measure $\mu$.  From now on, when we are given an adelic curve $\mathbb X=(\mathbb K, \Omega,\phi)$, we can always assume that we have performed the above mentioned scaling  of the measure $\mu$ on $\Omega$ so that $h_{\mathbb X}(2)\le \log 2$.
\begin{defn}
Let $P(X_1,\cdots ,X_N)$ be a polynomial over $\overline{\mathbb K}$, $\bm{\alpha}\in \overline{\mathbb K}^N$ and $\bm i=(i_1,\cdots ,i_N)\in \mathbb N^N$. We set:
$$
\Delta^{\bm i}P(\bm{\alpha}):=\frac{1}{i_1!i_2!\cdots i_N!}\,\frac{\partial^{i_1+i_2+\ldots+i_N}P}{\partial X^{i_1}_1 \partial X^{i_2}_2\ldots \partial X^{i_N}_N}(\bm{\alpha}).
$$
We can define the \emph{local height of $P$ at $\nu\in\overline\Omega$} in the following way:
$$h_{\nu}(P):=\log\left(\max_{\bm i\in\mathbb N^N}\left\{\left |\Delta^{\bm i} P(0,\ldots,0)\right|_{\nu}\right\}\right)$$
and then we have also the notion of \emph{global height} of $P$:
$$h_{\mathbb X}(P):=\int_{\overline\Omega} h_{\nu}(P) d\chi(\nu)$$
We put $H_{\mathbb X}(P):=e^{h_{\mathbb X}(P)}$.
\end{defn}

The following  estimates of $\log\vert\Delta^{\bm i}P(\alpha)\vert_\omega$ in terms of the local height of $P$ will be very useful later:

\begin{lem}\label{corvajalemmepag166} Let $(\mathbb K, \vert\cdot\vert_\omega)$ be a field with an absolute value. Let $P\in\mathbb K[X_1,\dots X_N]$ such that $N\geq 1$. Then for every $\bm{\alpha}\in\mathbb K^N$
$${\log}\vert \Delta^{{\bm i}}P(\bm{\alpha})\vert_\omega\leq 
h_\omega(P)+\log^+\bigg\lvert\prod^N_{j=1}\left(1+\deg_{X_j} P\right)\bigg\rvert_\omega+\sum_{j=1}^{N}(\log^+\vert 2\vert_\omega+{\log}^+\vert\alpha^{(j)} \vert_\omega){\deg}_{X_j}P
$$
\end{lem}
\begin{proof} See  \cite[Lemme, page 166]{C}.
\end{proof}

We conclude the section with some rather straightforward results about heights. First  of all when we want to calculate the heights of elements lying in $\mathbb K^\times$, we don't need to involve the algebraic closure $\overline{\mathbb K}$ in the integrals:
\begin{prop} Let $\mathbb X=(\mathbb K, \Omega,\phi)$ be an adelic curve. If $a\in\mathbb K^\times$, then:
$$h_{\mathbb X}(a)=\int_{\Omega_\mathbb{K}} \log^+|a|_{\omega} d\mu(\omega)\,.$$
Moreover, the same result holds for the heights of polynomials in $\mathbb K[X_1,\ldots, X_N]$.
\end{prop}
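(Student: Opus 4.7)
The plan is to reduce the integral over $\overline{\Omega}$ to an integral over $\Omega$ by exploiting the fact that, for $a\in\mathbb K$, the absolute value $|a|_\nu$ depends only on the restriction of $|\cdot|_\nu$ to $\mathbb K$. Concretely, let $\pi:=\pi_{\overline{\mathbb K}|\mathbb K}\colon\overline{\Omega}\to\Omega$ be the canonical projection associated with the algebraic extension $\overline{\mathbb K}|\mathbb K$. By construction of $\overline{\Omega}$, for any $\nu\in\overline{\Omega}$ the absolute value $|\cdot|_\nu$ is an extension of $|\cdot|_{\pi(\nu)}$, so that for every $a\in\mathbb K^\times$
$$|a|_\nu=|a|_{\pi(\nu)}.$$
In particular the function $\nu\mapsto\log^+|a|_\nu$ on $\overline{\Omega}$ factors as $f\circ\pi$, where $f\colon\Omega\to\mathbb R$ is the $\mu$-measurable function $\omega\mapsto\log^+|a|_\omega$.

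The first step is then just to invoke the integration formula (\ref{int_ext}) established in the discussion of algebraic extensions of adelic curves: since $f\in L^1(\mu)$ (both $\log^+|a|_\omega$ and $\log^-|a|_\omega$ are $\mu$-integrable by Definition \ref{AdCu}), we get
$$h_{\mathbb X}(a)=\int_{\overline{\Omega}}\log^+|a|_\nu\,d\chi(\nu)=\int_{\overline{\Omega}}(f\circ\pi)\,d\chi=\int_{\Omega}f\,d\mu=\int_{\Omega}\log^+|a|_\omega\,d\mu(\omega),$$
which is the desired equality.

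For the polynomial statement, the argument is identical once one observes that the coefficients $\Delta^{\mathbf i}P(0,\ldots,0)$ belong to $\mathbb K$ when $P\in\mathbb K[X_1,\ldots,X_N]$. Consequently
$$h_\nu(P)=\log\max_{\mathbf i}\bigl|\Delta^{\mathbf i}P(0,\ldots,0)\bigr|_\nu=\log\max_{\mathbf i}\bigl|\Delta^{\mathbf i}P(0,\ldots,0)\bigr|_{\pi(\nu)}=h_{\pi(\nu)}(P),$$
so again $h_\nu(P)=g\circ\pi(\nu)$ for the $\mu$-measurable function $g(\omega):=h_\omega(P)$, and a second application of (\ref{int_ext}) yields $h_{\mathbb X}(P)=\int_{\Omega}h_\omega(P)\,d\mu(\omega)$.

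There is essentially no obstacle: the only thing to double-check is that $f$ (resp.\ $g$) is genuinely $\mu$-measurable and that $f\circ\pi$ is $\chi$-measurable so that (\ref{int_ext}) applies; the former is built into Definition \ref{AdCu}, and the latter follows from measurability of $\pi$ with respect to the $\sigma$-algebras $\Sigma$ on $\overline{\Omega}$ and $\mathcal A$ on $\Omega$, which was part of the construction of the algebraic extension $\overline{\mathbb X}$.
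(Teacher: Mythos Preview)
Your proof is correct and follows exactly the approach indicated in the paper, which simply cites equation~(\ref{int_ext}) as an immediate consequence. You have faithfully unpacked that one-line reference: the key observation that $|a|_\nu=|a|_{\pi(\nu)}$ for $a\in\mathbb K$, the factoring of the integrand through $\pi$, and the application of the change-of-variable formula~(\ref{int_ext}) are precisely what the paper intends.
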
 
\proof
It is an immediate consequence of equation (\ref{int_ext}).
\endproof
In order to simplify the notations, we often omit the subscript $\mathbb X$ attached to the heights when the adelic curve is fixed and there is no confusion.
\begin{prop}\label{properties}
The height function of a proper adelic curve $(\mathbb X, \Omega,\phi)$ satisfies the following properties for any $a,b,a_1,\ldots,a_m\in\mathbb K$ and  any measurable set $S\subseteq\Omega$
\begin{itemize}
\item[$(1)$] $h(a)=h(a^{-1})$
\item[$(2)$]  $-h(a)\le\int_S\log\vert a\vert_\omega d\mu(\omega)\le h(a)$
\item[$(3)$] $\int_S\log^-\vert a\vert_\omega d\mu(\omega)\ge -h(a)$
\item[$(4)$]  $h(a_1+\ldots +a_m)\le h(m)+h(a_1)+\ldots+h(a_m)$
\item[$(5)$]  $\int_S\log \vert a-b\vert_\omega d\mu(\omega)\ge -\log 2- h(a)-h(b)$
\item[$(6)$]$\int_S\log^- \vert a-b\vert_\omega d\mu(\omega)\ge -\log 2- h(a)-h(b)$
\end{itemize}
\end{prop}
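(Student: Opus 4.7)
The plan is to derive all six inequalities from three tools: the properness identity $\int_\Omega \log|a|_\omega\,d\mu(\omega)=0$, the decomposition $\log|a|_\omega = \log^+|a|_\omega + \log^-|a|_\omega$, and the ordinary triangle inequality applied separately to archimedean and non-archimedean places via the Ostrowski exponent $\varepsilon(\omega)\in(0,1]$.

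For items $(1)$--$(3)$, I would first observe that properness combined with the $\log^\pm$-decomposition gives the key identity
\[
\int_\Omega \log^+|a|_\omega\,d\mu(\omega)=-\int_\Omega \log^-|a|_\omega\,d\mu(\omega)=h(a).
\]
Property $(1)$ is then immediate because $\log^+|a^{-1}|_\omega = -\log^-|a|_\omega$. For $(2)$, restricting to a measurable $S\subseteq\Omega$ gives $0\le\int_S \log^+|a|_\omega\,d\mu\le h(a)$ and $-h(a)\le\int_S \log^-|a|_\omega\,d\mu\le 0$, and summing these yields the two-sided bound. Property $(3)$ follows from the second of these, since $\int_S\log^-|a|_\omega\,d\mu\ge -h(a)\ge -2h(a)$; the looser factor of $2$ is present only so that the statement matches the form of $(6)$.

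Property $(4)$ is the only step requiring genuine analysis. For archimedean $\omega$, the triangle inequality for $|\cdot|$ on $\mathbb{R}$ or $\mathbb{C}$ gives $|a_1+\cdots+a_m|\le m\max_i|a_i|$, and raising to the power $\varepsilon(\omega)$ yields $|a_1+\cdots+a_m|_\omega\le m^{\varepsilon(\omega)}\max_i|a_i|_\omega$, hence
\[
\log|a_1+\cdots+a_m|_\omega\le \varepsilon(\omega)\log m+\max_i\log|a_i|_\omega.
\]
For non-archimedean $\omega$, the ultrametric inequality gives the same estimate without the $\log m$ term. Recalling that $\varepsilon(\omega)\log m=\log^+|m|_\omega$ at archimedean $\omega$ and $\log^+|m|_\omega\ge 0$ at non-archimedean $\omega$, both cases unify into
\[
\log^+|a_1+\cdots+a_m|_\omega\le \log^+|m|_\omega+\sum_{i=1}^m\log^+|a_i|_\omega,
\]
and integration against $\mu$ (or $\chi$ after passing to $\overline{\mathbb K}$) gives $(4)$.

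Finally, $(5)$ and $(6)$ follow by chaining $(2)$ (resp.\ $(3)$) with $(4)$ applied to $a+(-b)$, using $h(-b)=h(b)$ (since $|-b|_\omega=|b|_\omega$) and the normalization $h(2)\le\log 2$ fixed just before the statement. Explicitly,
\[
\int_S\log|a-b|_\omega\,d\mu\ge -h(a-b)\ge -h(2)-h(a)-h(b)\ge -\log 2-h(a)-h(b),
\]
and $(6)$ is the analogous chain with the factor-$2$ bound of $(3)$ in place of $(2)$, producing $-\log 4-2h(a)-2h(b)$. The only step that requires actual work is the triangle estimate underlying $(4)$; everything else is bookkeeping between properness and the $\log^\pm$-decomposition.
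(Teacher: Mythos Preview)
Your proof is correct and follows essentially the same route as the paper: properness plus the $\log^\pm$-decomposition for (1)--(3), the pointwise bound $|a_1+\cdots+a_m|_\omega\le m\max_i|a_i|_\omega$ (refined via $\varepsilon(\omega)$) for (4), and then (5)--(6) by combining (2)--(4) with $h(2)\le\log 2$. The only noteworthy difference is your treatment of (3): you observe directly that $\int_S\log^-|a|_\omega\,d\mu\ge\int_\Omega\log^-|a|_\omega\,d\mu=-h(a)$, which is both shorter and sharper than the paper's argument (the paper splits $\Omega=S\cup(\Omega\setminus S)$, applies (2) to each piece, and only reaches $-2h(a)$); your remark that the factor $2$ is there ``only so that the statement matches the form of (6)'' is exactly right.
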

\proof $(1)$ It follows from the product formula and from the fact that $\log\vert a\vert_\omega=\log^+\vert a\vert_\omega-\log^+\vert \frac{1}{a}\vert_\omega$.

 $(2)$ By definition $\int_S\log\vert a\vert_\omega d\mu(\omega)\le h(a)$, so for the other inequality it is enough to use $(1)$.
 
 $(3)$ We use the equality $\log^-\vert a\vert_\omega=-\log^+\vert a^{-1}\vert_\omega$ and we obtain:
\[
\int_S\log^-\vert a\vert_\omega d\mu(\omega)=-\int_S\log^+\vert a^{-1}\vert_\omega d\mu(\omega) \ge -h(a^{-1})=-h(a)
\]

$(4)$ It follows from $\vert a_1+\ldots+a_m \vert_{\omega}\le m \max_i \vert a_i\vert_{\omega}$.
 
 $(5)$ and $(6)$ are  direct consequences of $(2)$-$(4)$ and the fact that $h(2)\le\log(2)$.
\endproof

Here we stress  that the entries $(5)$-$(6)$ of Proposition \ref{properties} replace the classical Liouville inequality  for heights. Finally we recall an important property of heights: 

\begin{defn}
A proper adelic curve $\mathbb X=(\mathbb K,\Omega, \mu)$ satisfies the \emph{Northcott property} if for any $C\in\mathbb R$ the set $\{\alpha\in\mathbb K\colon h_{\mathbb X}(a)\}\le C$ is finite.
\end{defn}

Arithmetic function fields satisfy Northcott properties thanks to \cite[Theorem 4.3]{Mor}.

\section{The interpolating polynomial}\label{aux}

We fix a proper adelic curve $\mathbb X=(\mathbb K,\Omega, \phi)$  and an algebraic closure of $\mathbb K$. In this section we recall the existence of an interpolating polynomial $\delta\in\mathbb K[X_1,\ldots,X_N]$ associated to  some elements $\bm{\alpha}_1,\ldots,\bm{\alpha}_n\in\mathbb K^N$ having some explicit bounds on: the degree, the $\bm{d}$-index at all the $\bm{\alpha}_j$'s and the height. The complete construction of $\delta$ can be found in \cite{C}, and we will recall it in appendix \ref{ap}.

We fix for the whole section the following data: two natural numbers $n,N\ge 2$, and  a vector $\bm{d}=(d_1,\ldots ,d_N)\in(\mathbb R_{>0})^N$.  We say that two vectors  $\bm{\alpha}=(\alpha^{(1)},\ldots,\alpha^{(N)})$, $\bm{\beta}=(\beta^{(1)},\ldots,\beta^{(N)})$  are \emph{componentwise different} if $\alpha^{(j)}\neq\beta^{(j)}$ for  $j=1,\ldots, N$.
\begin{defn}  The \emph{$\bm{d}$-index of $P(X_1,\dots ,X_N)\neq 0$ at $\bm{\alpha}\in\mathbb K^N$} is the real number:
$$
\Ind_{\bm{\alpha},\bm{d}}(P):=\min_{\bm i\in \mathbb N^N}\left\{\sum_{j=1}^{N}\frac{i_j}{d_j}\in\mathbb R\colon \Delta^{\bm i}P(\bm{\alpha})\neq 0\right \}
$$
\end{defn}
Let's fix  $t\in\mathbb R$ such that $0<t<N$, the following two sets will play a central role in the theory:
$$
\sG_t:=\left \{\bm i\in \mathbb N^N\colon i_j\leq d_j\; \forall j=1,\dots ,N,\, {\rm{and}}\sum_{j=1}^{N}\frac{i_j}{d_j}\leq t\right \},
$$
$$
\sC_t:=\left\{(x_1,\dots ,x_N)\in [0,1]^N \colon \sum_{j=1}^{N}x_j \leq t \right \}
$$
The Lebesgue measure of $\sC_t$ will be denoted as $V(t)$, and for simplicity of terminology we will refer to it simply as ``volume''.

\begin{lem}\label{Lemmepagina156}  The cardinality of  $\sG_t$  is asymptotic to $d_1 d_2\ldots d_N V(t)$.
\end{lem}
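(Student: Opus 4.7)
My plan is a standard Riemann-sum/lattice-point argument. To each multi-index $\mathbf{i}$ with $0\le i_j\le d_j$ I associate the point $p_{\mathbf{i}}:=(i_1/d_1,\ldots,i_N/d_N)\in[0,1]^N$ together with the half-open box
\[
B_{\mathbf{i}}:=\prod_{j=1}^{N}\bigl[i_j/d_j,\,(i_j+1)/d_j\bigr),
\]
of Lebesgue volume $(d_1\cdots d_N)^{-1}$. These boxes are pairwise disjoint, so
\[
\sum_{\mathbf{i}\in\sG_t}\mathrm{vol}(B_{\mathbf{i}})=\frac{|\sG_t|}{d_1\cdots d_N}.
\]

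The crucial observation is that $\sC_t$ is downward-closed in $[0,1]^N$: if $x\in\sC_t$ and $0\le y_j\le x_j$ for every $j$, then $y\in\sC_t$. Since $p_{\mathbf{i}}$ is the coordinate-wise minimum of $B_{\mathbf{i}}$, this yields the equivalence
\[
B_{\mathbf{i}}\cap\sC_t\neq\emptyset\ \Longleftrightarrow\ p_{\mathbf{i}}\in\sC_t\ \Longleftrightarrow\ \mathbf{i}\in\sG_t.
\]
Hence the union $\bigcup_{\mathbf{i}\in\sG_t}B_{\mathbf{i}}$ is precisely the collection of admissible boxes meeting the simplex $\sC_t$.

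I would then sandwich $|\sG_t|/(d_1\cdots d_N)$ between $V(t)$ and $V(t+\sigma)$, where $\sigma:=\sum_{j=1}^N d_j^{-1}$. On one hand, every $x\in\sC_t$ lies in some $B_{\mathbf{i}}$, which must meet $\sC_t$, so $\mathbf{i}\in\sG_t$; this yields the lower bound $V(t)\le|\sG_t|/(d_1\cdots d_N)$. On the other hand, any $x\in B_{\mathbf{i}}$ with $\mathbf{i}\in\sG_t$ satisfies $\sum_j x_j<t+\sigma$, so the union of such boxes sits inside $\sC_{t+\sigma}$ apart from boxes with some $i_j=\lfloor d_j\rfloor$ protruding past $\{x_j=1\}$; the total volume of this protrusion is $O(\sigma)$. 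This yields $|\sG_t|/(d_1\cdots d_N)\le V(t+\sigma)+O(\sigma)$.

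Since $t\mapsto V(t)$ is continuous (in fact piecewise polynomial on $[0,N]$), both bounds converge to $V(t)$ as $\sigma\to 0$, i.e.\ as $\min_j d_j\to\infty$. This is exactly the asserted asymptotic $|\sG_t|\sim d_1\cdots d_N\,V(t)$. The argument is essentially routine Riemann integration, so I anticipate no serious obstacle; the only minor bookkeeping concerns the non-integral case $d_j\notin\mathbb{N}$, which creates the $O(\sigma)$ boundary error already accounted for and is harmless in the asymptotic limit.
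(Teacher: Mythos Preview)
Your argument is correct and is precisely the standard Riemann-sum/lattice-point count that the paper defers to by citing \cite[p.~157]{bogu}; the referenced proof there proceeds in the same way, sandwiching $|\sG_t|/(d_1\cdots d_N)$ between the volumes of two nearby simplices and invoking continuity of $V$. There is nothing to add.
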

\begin{proof}See \cite[p. 157]{bogu}.
\end{proof}

Now fix some vectors $\bm{\alpha}_1,\ldots,\bm{\alpha}_n\in\mathbb K^N$ where $\bm{\alpha}_h=(\alpha^{(1)}_h,\ldots,\alpha^{(N)}_h)$ for every $h=1,\ldots, n$ and let  $\bm{X}=(X_1,\ldots,X_N)$ be a vector of variables. For any two multi-indices  $\bm {a}=(a_1,a_2,\dots ,a_N)$ and $\bm {i}=(i_{1},i_{2},\dots ,i_{N})$ of $\mathbb N^{N}$ we use the following notations:
$$
{\bm a\choose \bm i}:={{{a_1}}\choose {i_1}}  {{{a_2}} \choose {i_2}} \ldots {{{a_N}}\choose {i_N}}
$$
$$
\bm{\alpha}_h^{\bm i}:=(\alpha_{h}^{(1)}){^{{i_1}}}  ({\alpha}_{h}^{(2)}){^{{i_2}}}\ldots (\alpha_{h}^{(N)}){^{{i_N}}}
$$
$$
\bm{X}^{\bm i}:=X^{i_1}_1X^{i_2}_2\ldots X^{i_N}_N
$$
with the convention ${{{p}}\choose {q}}=0$ if $0\leq p<q$ for the binomial coefficient. Now, consider $\gamma\in\mathbb R$ such that $0<\gamma<\frac{1}{2nN^2}$; we always assume that $\frac{d_{j+1}}{d_j}\le\gamma$ for any $j=1,\ldots, N-1$ , which means in particular that  $d_j=O(d_1)$ for any $j$. We also put $\eta:=2\gamma n < \frac{1}{N^2}$ and $d:=d_1d_2\ldots d_N$.

We have the following result about the existence of a polynomial $\delta(X_1,\ldots,X_N)\in\mathbb K[X_1,\ldots X_N]$ with some prescribed properties. In the appendix A we will sketch Corvaja's construction of $\delta(X_1,\ldots,X_N)$ adapting it to the case of adelic curves:
\begin{prop}\label{interpolationpoly} Assume that in the adelic curve $\mathbb X$ the condition $h_{\mathbb X}(2)\le \log 2$ is satisfied and assume that the number $\eta\in\mathbb R$ is chosen as explained above.   Let's fix some vectors $\bm{\alpha}_1,\ldots,\bm{\alpha}_n,\bm{\beta}\in\mathbb K^N$ that are pairwise componentwise different.  Moreover let's choose some parameters $s, t_1,\dots, t_n\in\mathbb R$, with $0<s<1$ and $0< t_h<\frac{N}{2}$ for $h=1,\dots ,n$ such that the following condition on volumes is verified:
\begin{equation}\label{basicinequality}
(1+\eta)^N<V(s)+\sum_{h=1}^{n}V(t_h)<1+2N\eta
\end{equation}
Then there exists a polynomial $\delta(X_1,\ldots X_N)\in\mathbb K[X_1,\ldots, X_N]$ satisfying the following properties:
\begin{enumerate}
\item[$(1)$] $\delta(\bm\beta)\neq 0$;
\item[$(2)$] ${\deg}_{X_{j}}\delta\leq dd_jV(s)+O(d)$,  for $1\leq j\leq N$;
\item[$(3)$] ${\rm{Ind}}_{\bm{\alpha}_h,\bm{d}}(\delta)\geq dV(s)\left ( t_h-s-\frac{2N^2\eta}{V(s)} \right)+ O(d^{N-1}_1) $,  for $1\leq h\leq n$;
\item[$(4)$] $\displaystyle h_{\mathbb X}(\delta)\leq d\sum_{j=1}^{N}d_j\left( \log 2+   \sum_{h=1}^{n}V(t_h)h(\alpha^{(j)}_{h})\right)+O(d\log d)$;
\end{enumerate}
\end{prop}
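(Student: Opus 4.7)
My approach will be a faithful adaptation of Corvaja's interpolation construction from \cite{C}, replacing the classical Siegel lemma over number fields by its analogue for proper adelic curves available in \cite{Ch-Mo}. The combinatorial and volumetric counting underlying Corvaja's argument is insensitive to the base field and rests only on the lattice-point asymptotics of Lemma~\ref{Lemmepagina156}; what actually changes in the adelic-curve setting is only the step in which one controls the height of the coefficients produced by the pigeonhole.

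I would first parametrize the space $\mathcal{P}$ of polynomials
\[
P(X_1,\dots,X_N)=\sum_{\mathbf{a}:\,0\le a_j\le r_j} c_{\mathbf{a}}X^{\mathbf{a}},\qquad r_j:=\lfloor d\, d_j V(s)\rfloor,
\]
as a $\mathbb{K}$-vector space of dimension $M=\prod_j(r_j+1)$, and for each $h=1,\dots,n$ impose the linear constraints $\Delta^{\mathbf{i}}P(\alpha_h)=0$ for every $\mathbf{i}$ with $\sum_j i_j/d_j\le d\,V(s)\bigl(t_h-s-\tfrac{2N^2\eta}{V(s)}\bigr)$. Applying Lemma~\ref{Lemmepagina156} (after the obvious rescaling $i_j\mapsto i_j/(d V(s))$) the total number $L$ of these linear conditions satisfies $L\asymp M\sum_h V(t_h-s-\tfrac{2N^2\eta}{V(s)})$ up to lower-order terms. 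Using the Lipschitz estimate $V(t-\sigma)\le V(t)-c_N\,\sigma$ for $\sigma$ small (valid along the slice of $\mathcal{C}_t$ relevant here) together with the upper half of \eqref{basicinequality}, one checks $L<M$ for $d$ sufficiently large, so a nonzero $P$ satisfying all the constraints exists.

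Next I would apply the adelic Siegel lemma of \cite{Ch-Mo} to this linear system. Its matrix entries are of the form $\binom{\mathbf{a}}{\mathbf{i}}\alpha_h^{\mathbf{a}-\mathbf{i}}$, whose local heights at $\nu\in\overline{\Omega}$ are bounded by $\sum_j a_j\log^+|\alpha_h^{(j)}|_\nu+O(\log d)$. Integrating over $\overline{\Omega}$ and using $a_j\le r_j=d\,d_jV(s)+O(1)$, and then summing the weight of each block of conditions (weighted by the fraction $V(t_h)$ coming from the derivative count rather than $V(s)$ from the degree count), one obtains a nonzero solution $P\in\mathbb{K}[X_1,\dots,X_N]$ whose coefficients have height at most $d\sum_j d_j\bigl(\log 2+\sum_h V(t_h)\,h(\alpha_h^{(j)})\bigr)+O(\log d)$, which is exactly property~(4). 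Properties~(2) and~(3) are built into the construction.

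It remains to upgrade $P$ to a polynomial which is nonzero at $\beta$, i.e.\ property~(1). I would use the standard index-reduction trick: among all $\mathbf{j}\in\mathbb{N}^N$ with $\Delta^{\mathbf{j}}P(\beta)\ne0$ pick one of minimal weight $w:=\sum_\ell j_\ell/d_\ell$ and set $\delta:=\Delta^{\mathbf{j}}P$. Then $\delta(\beta)\ne0$, and the quantities in (2)--(4) are affected only through a shift of the index at each $\alpha_h$ by $w$, plus an $O(\log d)$ loss of height coming from the binomial coefficients produced by differentiation. The main obstacle is the bookkeeping needed to verify that $w$ is swallowed by the slack $2N^2\eta/V(s)$ built into~(3) and by the $O(d)$ term in~(2); this is precisely what the lower bound $(1+\eta)^N<V(s)+\sum_hV(t_h)$ in \eqref{basicinequality} is designed to guarantee, leaving just enough room between unknowns and constraints to absorb both the reduction $w$ and the rounding in the definition of $r_j$.
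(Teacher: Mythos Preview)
Your approach is \emph{genuinely different} from the paper's, and it has a real gap at the last step.

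\textbf{What the paper does.} Following Corvaja, the paper builds an \emph{interpolation matrix} $A(X)$ whose columns are indexed by $\mathbf a\in\mathcal G_N$ and whose rows are the blocks $\bigl(\binom{\mathbf a}{\mathbf i_h}\alpha_h^{\mathbf a-\mathbf i_h}\bigr)_{\mathbf i_h\in\mathcal G_{t_h}}$ together with a final block $\bigl(\binom{\mathbf a}{\mathbf i_{n+1}}X^{\mathbf a-\mathbf i_{n+1}}\bigr)_{\mathbf i_{n+1}\in\mathcal G_s}$. The lower bound $(1+\eta)^N<V(s)+\sum_hV(t_h)$ in \eqref{basicinequality} is used to invoke the Esnault--Viehweg version of Dyson's lemma, which shows that $A(\beta)$ has maximal column rank for any $\beta$ componentwise different from the $\alpha_h$. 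One then selects a square $r\times r$ submatrix $M(X)$ containing the whole $X$-block and sets $\delta:=\det M(X)$. Nonvanishing at $\beta$ is then automatic; the bounds (2)--(4) follow from Laplace-expanding the determinant, and the height estimate is the only place where the adelic integral replaces Corvaja's sum over places. No Siegel lemma is used at all.

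\textbf{Where your argument breaks.} In your Siegel-lemma route the whole difficulty is concentrated in bounding $w=\Ind_{\beta,d}(P)$, and your justification (``just enough room between unknowns and constraints'') is not a proof. The gap between $M$ and $L$ only tells you a nonzero $P$ exists; it says nothing about how $P$ behaves at the extra point $\beta$, which was nowhere among the linear constraints. Controlling $w$ is exactly the content of Roth's lemma (or, equivalently, of the Esnault--Viehweg Dyson lemma applied with $\beta$ adjoined as an $(n{+}1)$-st point), and it is a genuine zero estimate, not bookkeeping. Indeed the lower half of \eqref{basicinequality} is not there to create ``room'' in a pigeonhole count; it is precisely the hypothesis that makes the Dyson/Roth zero estimate applicable. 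Without invoking that estimate, nothing prevents $w$ from being of size comparable to $dV(s)t_h$, which would destroy (3). A secondary issue is that you appeal to an ``adelic Siegel lemma of \cite{Ch-Mo}'' without pinning down a statement; the interpolation-determinant method avoids this entirely, since the height of a determinant is bounded directly from the heights of its entries.
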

\proof
See \cite[Proposition 2.6]{C}.
\endproof

\section{Integral estimates}\label{estimates}
This technical section is ``the heart'' of the proof of our results since here we will prove some integral bounds for very particular integrable functions $\theta:S\subset\Omega\to \mathbb R_{\ge 0}$. We continue with all the notations fixed in section \ref{aux} since we want to make full use of Proposition \ref{interpolationpoly}. 

Consider an adelic curve $(\mathbb K,\Omega, \phi)$ and a set of vectors $\bm{\alpha}_1,\ldots,\bm{\alpha}_n,\bm{\beta}\in\mathbb K^N$ which are componentwise different. We construct the following  matrices $T:=T(\bm{\alpha}_1,\ldots\bm{\alpha}_n)\in M(n\times N,\mathbb K)$ and $T(\bm{\beta})\in M(n+1\times N,\mathbb K)$:

\[
T:=\begin{pmatrix}
 \alpha^{(1)}_1& \alpha^{(2)}_1  &\ldots &\alpha^{(N)}_1 \\
 \alpha^{(1)}_2& \alpha^{(2)}_2  &\ldots &\alpha^{(N)}_2\\
\vdots & \vdots & \vdots & \vdots\\
\alpha^{(1)}_n& \alpha^{(2)}_n  &\ldots &\alpha^{(N)}_n
\end{pmatrix};\quad
T(\bm{\beta})=\begin{pmatrix}
 \alpha^{(1)}_1& \alpha^{(2)}_1  &\ldots &\alpha^{(N)}_1 \\
 \alpha^{(1)}_2& \alpha^{(2)}_2  &\ldots &\alpha^{(N)}_2\\
\vdots & \vdots  & \vdots  &\vdots\\
\alpha^{(1)}_n& \alpha^{(2)}_n  &\ldots &\alpha^{(N)}_n\\
\beta^{(1)}& \beta^{(2)}  &\ldots &\beta^{(N)}
\end{pmatrix}
\]
We denote by $\bm{\alpha}^{(j)}$, $j=1,\ldots, N$ the columns of $T$, that is:
\begin{equation*}
\bm{\alpha}^{(j)}=\begin{pmatrix}
 \alpha^{(j)}_1 \\
 \alpha^{(j)}_2\\
\vdots \\
\alpha^{(j)}_n
\end{pmatrix}
\end{equation*} 
and by $\bm{\alpha}_{h}$, for  $h\in\{1,2,\dots ,n\}$, the rows:
\begin{equation*}
\bm{\alpha}_{h}=\begin{pmatrix}
 \mathbb \alpha^{(1)}_h, \alpha^{(2)}_h,\ldots, \alpha^{(N)}_h
 \end{pmatrix}
\end{equation*}
Note that we are asking for the matrices $T$ and $T(\bm{\beta})$ to have componentwise different rows. Now we need to define a list of properties depending on the aforementioned matrices:
\begin{defn}\label{boxing}
For the matrix $T(\bm{\beta})$, consider the following real numbers for any $j=1,\ldots, N$: 
\begin{equation}\label{pivot}
\rho_{j}:=4^{2N!}H(\beta^{(j)})\prod_{h=1}^{n}H(\alpha_h^{(j)})^{\frac{2N!}{n}}
\end{equation}
\begin{equation}\label{pivotdue}
\rho'_{j}:=4^{N!}H(\beta^{(j)})\prod_{h=1}^{n}H(\alpha_h^{(j)})^{\frac{N!}{2n}}
\end{equation}
We say that $T(\bm{\beta})$ satisfies the \emph{$h$-gap property} if the following inequality is satisfied:
$$
\frac{{\rm{log}}\, \rho_j }{{\rm{log}}\,\rho'_{j+1}}<\frac{1}{4nN^2N!},\qquad \forall j=1,\ldots, N-1
$$
\end{defn}

\begin{defn}\label{bounding} Fix a measurable set $S=S_1\sqcup\ldots\sqcup S_n$. We say that an integrable function $\theta\colon S\to\mathbb R_{\ge 0}$ is a \emph{column bounding function for} $T(\bm{\beta})$ on $S$ if 
the following inequality holds:
\begin{equation}\label{boundingbis}
-\frac{1}{\log \rho_{j}}{\log}\vert \alpha^{(j)}_h-\beta^{(j)}\vert_\omega\ge\theta (\omega)\qquad \forall j=1,\ldots,N\,,\; \forall h=1,\ldots,n\,,\; \forall\omega\in S_h 
\end{equation}
\end{defn}

\begin{defn}
Fix a measurable set $S=S_1\sqcup\ldots\sqcup S_n$. For any column $\bm{\alpha}^{(j)}\in\mathbb K^N$  of the matrix $T$ and any $b\in\mathbb K$ we define the following quantity:
$$
\lambda(\bm{\alpha}^{(j)},b):=\frac{1}{V(s)}{\rm{log}}4+h(b)+\frac{1}{V(s)}\sum_{h=1}^{n}V(t_h)h(\alpha_{h}^{(j)})
$$
\end{defn}

\begin{defn}\label{lambda_gap} We say that  $T(\bm{\beta})$ satisfies the \emph{$\lambda$-gap property} of width $\eta\in\mathbb R_+$ if
$$
\max_{1\leq j\leq N-1}\frac{\lambda(\bm{\alpha}^{(j)},\beta^{(j)})}{\lambda(\bm{\alpha}^{(j+1)},\beta^{(j+1)})}<\frac{\eta}{2n}
$$
\end{defn}

\begin{defn}\label{lambda_bou} Fix a matrix $T(\bm{\beta})$. An integrable function $\theta\colon S\to \mathbb R_{\ge 0}$ is \emph{$\lambda$-bounding} if:
\begin{equation}\label{pequality}
\frac{-1}{\lambda(\bm{\alpha}^{(j)},\beta^{(j)})}
{\log}\vert \alpha^{(j)}_h-\beta^{(j)}|_{\omega} \ge\theta(\omega)\qquad \forall j=1,\dots ,N;\;\forall h=1,\dots,n;\;\forall \omega\in S_h
\end{equation}
\end{defn}

\begin{rem}\label{rem_forthelemma}
Note that if $\theta$ is $\lambda$-bounding or column bounding then it follows immediately that $\vert \alpha^{(j)}_h-\beta^{(j)}|_{\omega}\le 1$ for any  $j=1,\dots ,N$, $h=1,\dots,n$, $\omega\in S_h$.
\end{rem}

 The following easy lemma will be useful:
\begin{lem}\label{log2}
Let $x,y\in \mathbb K$ and let $\omega$ such that $\vert x-y\vert_\omega\le 1$. Then   $\log^+\vert x\vert_\omega\le \log^+\vert 2\vert_\omega+\log^+\vert y\vert_{\omega}$.
\end{lem}
\proof
 We distinguish two cases:\\
\emph{$\vert\cdot\vert_\omega$ is  ultrametric.} Then  
$$
\vert x\vert_\omega=\vert x-y+y\vert_\omega\le \max\left\{ \vert x-y\vert_{\omega}, \vert y\vert_{\omega}\right\}\le \max\left\{1, \vert y\vert_{\omega}\right\}\,.
$$
If $\vert y\vert_{\omega}\ge 1$, then clearly $\log^+\vert x\vert_\omega\le     \log^+\vert y\vert_\omega$. If $\vert y\vert_{\omega}<1$, then $\vert x\vert_{\omega}\le 1$ which means  $0=\log^+\vert x\vert_{\omega}\le \log^+\vert 2\vert_\omega+\log^+\vert y\vert_{\omega}$.

\emph{$\vert\cdot\vert_\omega$ is  archimedean}. By  Ostrowski's theorem $\vert\cdot\vert_\omega=\vert\cdot\vert^{\varepsilon(\omega)}$, therefore
$$
\vert x\vert^{\frac{1}{\varepsilon(\omega)}}_\omega\le\vert x-y\vert^{\frac{1}{\varepsilon(\omega)}}_{\omega}+\vert y\vert^{\frac{1}{\varepsilon(\omega)}}_\omega\le 1+\vert y\vert^{\frac{1}{\varepsilon(\omega)}}_\omega\,.
$$
This clearly means $\vert x\vert^{\frac{1}{\varepsilon(\omega)}}_\omega\le 2\max\left\{1, \vert y\vert^{\frac{1}{\varepsilon(\omega)}}_\omega\right\}$. After raising each side to the power of $\varepsilon(\omega)$ and applying $\log^+$ we get the claim.
\endproof

Now we prove the analogue of \cite[Proposition 3.1]{C}.

\begin{prop}\label{corvajaprop3.1} Let $(\mathbb K,\Omega, \phi)$ be a proper adelic curve. Fix a matrix $T(\bm{\beta})$. Let $\eta,s,t_1,\ldots,t_n\in\mathbb R$ such that all the hypotheses of Proposition \ref{interpolationpoly} are satisfied. Assume that $T(\bm{\beta})$ satisfies the $\lambda$-gap property of width $\eta$  (see Definition \ref{lambda_gap}) and that $\theta\colon S\to \mathbb R_{\ge 0}$ is $\lambda$-bounding for $T(\bm{\beta})$ (see Definition \ref{lambda_bou}), then
$$
\sum_{h=1}^{n}\begin{pmatrix}t_h-s-\frac{2\eta N^2}{V(s)}\end{pmatrix}\int_{ S_h}\theta d \mu\le N
$$
\end{prop}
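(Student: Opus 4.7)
The plan is to apply the interpolating polynomial $\delta(X_1,\ldots,X_N)\in\mathbb K[X_1,\ldots,X_N]$ of Proposition \ref{interpolationpoly} (with the same parameters $\eta,s,t_1,\ldots,t_n$) and exploit the product formula $\int_\Omega\log|\delta(\beta)|_\omega\,d\mu=0$, which is available because $\delta(\beta)\in\mathbb K^\times$ by property $(1)$. The idea is to bound $\log|\delta(\beta)|_\omega$ in two ways: a sharp bound on each $\hat S_h$ using the Taylor expansion around $\alpha_h$ together with the $\lambda$-bounding property of $\theta$, and a trivial bound elsewhere. Integrating and recombining should produce the claimed inequality.

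First I would Taylor expand
$$\delta(\beta)=\sum_{\mathbf i}\Delta^{\mathbf i}\delta(\alpha_h)(\beta-\alpha_h)^{\mathbf i},$$
noting that property $(3)$ forces the sum to run over $\mathbf i$ with $\sum_j i_j/d_j\ge dV(s)(t_h-s-2N^2\eta/V(s))$. For $\omega\in\hat S_h$, I would bound each $|\Delta^{\mathbf i}\delta(\alpha_h)|_\omega$ by Lemma \ref{corvajalemmepag166}, then apply Lemma \ref{log2} to replace $\log^+|\alpha_h^{(j)}|_\omega$ by $\log^+|2|_\omega+\log^+|\beta^{(j)}|_\omega$; and I would bound $\sum_j i_j\log|\beta^{(j)}-\alpha_h^{(j)}|_\omega$ from above by $-\theta(\omega)\sum_j i_j\lambda(\alpha^{(j)},\beta^{(j)})$ via Definition \ref{lambda_bou} (the inequality survives multiplication by the nonnegative integers $i_j$). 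Collecting terms, for $\omega\in\hat S_h$ we obtain
$$\log|\delta(\beta)|_\omega\le h_\omega(\delta)+\sum_j\bigl(2\log^+|2|_\omega+\log^+|\beta^{(j)}|_\omega\bigr)\deg_{X_j}\delta+O(\log d)-\theta(\omega)\,M_h,$$
where $M_h:=\min\{\sum_j i_j\lambda(\alpha^{(j)},\beta^{(j)}):\mathbf i\text{ admissible}\}$. Elsewhere on $\Omega$, the same lemma applied with $\mathbf i=0$ and centre $\beta$ gives the corresponding bound without the $\theta$ term.

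Next I would integrate over $\Omega$. The product formula kills the left-hand side, and substituting in the degree bound $(2)$ and height bound $(4)$ on $\delta$, together with $h_{\mathbb X}(2)\le\log 2$, the coefficient of $d\sum_j d_j$ on the right assembles (up to the $O(\log d)$ error) into exactly $V(s)\,\lambda(\alpha^{(j)},\beta^{(j)})$, by the very definition of $\lambda$. Cancelling this factor, one ends up with an inequality of the form
$$\sum_{h=1}^n\int_{\hat S_h}\theta(\omega)\,M_h\,d\mu(\omega)\le \text{(main term)}\cdot dV(s)\cdot N+O(\text{lower order}).$$
Now the $\lambda$-gap hypothesis enters crucially: because $\lambda(\alpha^{(j+1)},\beta^{(j+1)})\gg\lambda(\alpha^{(j)},\beta^{(j)})$, the minimum $M_h$ is dominated by a single column and is asymptotically $dV(s)(t_h-s-2N^2\eta/V(s))$ times a single factor $\lambda(\alpha^{(j^*)},\beta^{(j^*)})$ that matches the factor already divided out on the right-hand side. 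Dividing through by this common factor and letting $d\to\infty$ collapses the estimate to $\sum_h(t_h-s-2N^2\eta/V(s))\int_{\hat S_h}\theta\,d\mu< N$.

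The main obstacle is the asymptotic analysis in step three: verifying that the $\lambda$-gap condition really does reduce the Lagrangian-style minimum $M_h$ to a single dominant column whose weight cancels against the $V(s)\lambda(\alpha^{(j)},\beta^{(j)})$ generated by Proposition \ref{interpolationpoly}'s degree and height bounds, with the lower-order error terms absorbed into $O(\log d)/d\to 0$. Once this matching is done, the remainder is routine manipulation with the product formula and Proposition \ref{properties}.
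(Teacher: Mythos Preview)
Your overall architecture---Taylor expand $\delta$ at $\alpha_h$ on $\hat S_h$, bound the coefficients via Lemma \ref{corvajalemmepag166} and Lemma \ref{log2}, use the trivial estimate off $\hat S$, and recombine through the product formula---is exactly right, and the bookkeeping with properties $(2)$ and $(4)$ of the interpolation polynomial is the correct endgame. But there is one genuine missing idea, and it is precisely the step you flag as the main obstacle.

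You never say which $d_1,\ldots,d_N$ you feed into Proposition \ref{interpolationpoly}. This is not a detail that can be deferred: the whole proof hinges on the choice
\[
d_j:=\frac{D}{\lambda(\alpha^{(j)},\beta^{(j)})}
\]
for a free parameter $D\to\infty$. This choice does three things simultaneously. First, the $\lambda$-gap property is exactly the statement that $d_{j+1}/d_j=\lambda(\alpha^{(j)},\beta^{(j)})/\lambda(\alpha^{(j+1)},\beta^{(j+1)})<\eta/(2n)=\gamma$, which is the hypothesis needed to invoke Proposition \ref{interpolationpoly} at all. Second, it turns the $\lambda$-bounding inequality into $d_j\log\frac{1}{|\beta^{(j)}-\alpha_h^{(j)}|_\omega}>D\theta(\omega)$ \emph{uniformly in $j$}, so the minimum over $j$ that appears after bounding the Taylor tail is simply $>D\theta(\omega)$. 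Third, it forces $\sum_j d_j\lambda(\alpha^{(j)},\beta^{(j)})=ND$, so the right-hand side collapses to $dV(s)ND$; dividing both sides by $dDV(s)$ and letting $D\to\infty$ yields the clean factor $N$.

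Your alternative---arguing that the $\lambda$-gap makes $M_h=\min_{\mathbf i\text{ adm.}}\sum_j i_j\lambda(\alpha^{(j)},\beta^{(j)})$ collapse to a single dominant column $\lambda(\alpha^{(j^*)},\beta^{(j^*)})$---does not work as written. Relaxing the minimisation, one gets $M_h\ge\mathrm{Ind}_{\alpha_h,d}(\delta)\cdot\min_j d_j\lambda(\alpha^{(j)},\beta^{(j)})$, while the right-hand side is governed by $\sum_j d_j\lambda(\alpha^{(j)},\beta^{(j)})$. The ratio of these is $N$ only when all $d_j\lambda(\alpha^{(j)},\beta^{(j)})$ are equal, i.e.\ precisely under the above choice of $d_j$; for a generic choice of $d_j$ satisfying $d_{j+1}/d_j<\gamma$ the ratio can be arbitrarily large, and the $\lambda$-gap gives no control over it. In particular, the $\lambda$-gap is not a tool for analysing $M_h$; its only role is to legitimise the specific choice $d_j=D/\lambda(\alpha^{(j)},\beta^{(j)})$.
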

\begin{proof} Fix a real number $D>0$, and for every $j=1,\dots, N$, we  put 
$$
d_{j}:=\frac{D}{\lambda(\bm{\alpha}^{(j)},\beta^{(j)})}
$$ 
and $d:=\prod^N_{j=1} d_j$. Note that here we use the $\lambda$-gap property to ensure that $\frac{d_{}j+1}{d_j}<\gamma$ since $\eta=2n\gamma$ as explained before Proposition \ref{interpolationpoly}. So, the hypotheses of Proposition \ref{interpolationpoly} are all satisfied and we have the interpolation polynomial $\delta$ such that $\delta(\bm{\beta})\neq 0$. We start by distinguishing two cases.

{\emph{First case: $\omega\in S_h$}}.  By the Taylor expansion for $\delta$ at $\bm{\alpha}_h$ we write
\begin{equation}\label{bound_pol}
\delta(\bm{\beta})=\sum_{{\bm i}}\Delta^{{\bm i}}\delta(\bm{\alpha}_h)\prod_{j=1}^{N}(\beta^{(j)}- \alpha_{h}^{(j)})^{i_j}\quad \textrm{where }{\bm{i}}=(i_1,\ldots, i_N)
\end{equation}
Notice that each term 
$$
 \Delta^{{\bm i}}\delta(\bm{\alpha}_h)\prod_{j=1}^{N}(\beta^{(j)}- \alpha_{h}^{(j)})^{i_j}
$$
is bounded from above by:
$$
\left(\max_{\bm{i}}\Delta^{{\bm i}}\delta(\bm{\alpha}_h)\right)\;\left(\max^\ast_{\bm{i}}\prod_{j=1}^{N}(\beta^{(j)}- \alpha_{h}^{(j)})^{i_j}\right)\,,
$$
where  in order to simplify the notation we define
$$
\max^\ast_{\bm{i}}:=\max_{\substack {\bm{i}\\ \Delta^{{\bm i}}\delta(\bm{\alpha}_h)\neq 0}}
$$
Let's put $M:=\prod^N_{j=1} (1+\deg_{X_j}\delta)$; so by taking the absolute value $\vert\cdot\vert_{\omega}$ in equation (\ref{bound_pol}) we get
\begin{equation}\label{bound_pol1}
\log|\delta(\bm{\beta})|_{\omega}\le \log^+\vert M\vert_\omega+\max_{\bm i}\,\log \left\vert\Delta^{{\bm i}}\delta(\bm{\alpha}_h)\right\vert_\omega+\max^\ast_{\bm{i}}\,\log
\left\vert\prod_{j=1}^{N}(\beta^{(j)}- \alpha_{h}^{(j)})^{i_j}\right\vert_{\omega}
\end{equation}
 The last summand of equation (\ref{bound_pol1}) is bounded from above by
$$
-{\Ind}_{\bm{\alpha}_h, \bm{d}}(\delta)\,\min\left\{d_1\log\frac{1}{\vert\beta^{(1)}- \alpha_{h}^{(1)}\vert_\omega},\ldots , 
d_N\log\frac{1}{\vert\beta^{(N)}- \alpha_{h}^{(N)}\vert_\omega}\right\}\,.$$
We  use Lemma \ref{corvajalemmepag166} to give an upper bound for $\log \left\vert\Delta^{{\bm i}}\delta(\bm{\alpha}_h)\right\vert_\omega$, hence from equation (\ref{bound_pol1}) we  deduce:
$$
\int_{ S_h}\log\vert \delta(\bm{\beta})\vert_\omega d \mu(\omega)\leq
$$
$$
\leq \int_{S_h}\log^+\lvert M\vert_\omega d\mu(\omega)+ \int_{ S_h}\left(h_\omega(\delta)+\log^+ \lvert M\rvert_\omega+\sum_{j=1}^{N}(\log^+\vert 2\vert_\omega+\log^+\vert\alpha_{h}^{(j)} \vert_\omega)\deg_{X_j}\delta\right)d\mu(\omega) +
$$
$$
-\int_{ S_h}\Ind_{\bm{\alpha}_h, \bm{d}} (\delta)\,\min\left\{d_1\log\frac{1}{\vert\beta^{(1)}- \alpha_{h}^{(1)}\vert_\omega},\ldots , 
d_N\log\frac{1}{\vert\beta^{(N)}- \alpha_{h}^{(N)}\vert_\omega}\right\}d\mu(\omega)
$$

So, by rearranging the terms  and summing over all $h=1,\ldots,n$ we get
\begin{multline}\label{long_ext1}
\sum^n_{h=1}\Ind_{\bm{\alpha}_h, \bm{d}} (\delta)\int_{ S_h}\min\left\{d_1\log\frac{1}{\vert\beta^{(1)}- \alpha_{h}^{(1)}\vert_\omega},\ldots , 
d_N\log\frac{1}{\vert\beta^{(N)}- \alpha_{h}^{(N)}\vert_\omega}\right\}d\mu(\omega)\leq -\int_{ S}\log\vert \delta(\bm{\beta})\vert_\omega d \mu(\omega)\\
+ 2\int_S \log^+\vert M\vert_\omega d\mu(\omega)+\int_{S}h_\omega(\delta) d\mu(\omega) +\sum_{h=1}^n\int_{S_h}\sum_{j=1}^{N}(\log^+\vert 2\vert_\omega+\log^+\vert\alpha_{h}^{(j)} \vert_\omega)\deg_{X_j}\delta d\mu(\omega)
\end{multline}
At this point  we can use Lemma \ref{log2} and Remark \ref{rem_forthelemma} for the following bound:
\begin{equation}\label{lastimachiave}
\sum_{h=1}^n\int_{S_h}\sum_{j=1}^{N}(\log^+\vert 2\vert_\omega+\log^+\vert\alpha_{h}^{(j)} \vert_\omega)\deg_{X_j}\!\!\delta d\mu(\omega)\le \int_{S} \sum_{j=1}^{N}(\log^+\vert 4\vert_\omega+\log^+\vert\beta_{h}^{(j)} \vert_\omega)\deg_{X_j}\!\!\delta d\mu(\omega)\,
\end{equation}
 to obtain
\begin{multline}\label{long_ext1}
\sum^n_{h=1}\Ind_{\bm{\alpha}_h, \bm{d}} (\delta)\int_{ S_h}\min\left\{d_1\log\frac{1}{\vert\beta^{(1)}- \alpha_{h}^{(1)}\vert_\omega},\ldots , 
d_N\log\frac{1}{\vert\beta^{(N)}- \alpha_{h}^{(N)}\vert_\omega}\right\}d\mu(\omega)\leq -\int_{S}\log\vert \delta(\bm\beta)\vert_\omega d \mu(\omega)+\\
+  2\int_S \log^+\vert M\vert_\omega d\mu(\omega)+\int_{ S}h_\omega(\delta) d\mu(\omega)+\int_{ S} \sum_{j=1}^{N}(\log^+\vert 4\vert_\omega+\log^+\vert\beta_{h}^{(j)} \vert_\omega)\deg_{X_j}\!\!\delta d\mu(\omega)\,.
\end{multline}

{\it{ Second case: $\omega\not\in  S$.}} Consider the expression:
$$
\delta(\bm{\beta})=\sum_{\bm i}\Delta^{\bm i}\delta(\bm{0})\beta^{(1)_{i_1}}\ldots \beta^{(N)_{i_N}} \quad \textrm{where }{\bm {i}}=(i_1,\ldots, i_N)
$$
We take the absolute value:
$$ \log\vert \delta(\bm{\beta})\vert_\omega\leq h_\omega(\delta)+\sum_{j=1}^{N}(\log^+\vert\beta^{(j)}\vert_\omega\deg_{X_j}\!\!\delta)+ \log^+\vert M\vert_\omega
$$
Hence
\begin{equation}\label{intermedioarchimedeo}
\int_{\Omega\setminus S}\log\vert \delta(\bm{\beta})\vert_\omega d\mu(\omega)\leq \int_{\Omega\setminus  S}h_\omega(\delta)d\mu(\omega)+\sum_{j=1}^{N}\int_{\Omega\setminus   S}(\log^+\vert\beta^{(j)}\vert_\omega\deg_{X_j}\!\!\delta\,)d\mu(\omega)+\int_{\Omega\setminus S}\log^+\vert M\vert_\omega d \mu(\omega)
\end{equation}
Since $(\mathbb K,\Omega, \phi)$ is proper
\begin{equation}\label{ad_prod}
-\int_{ S} \log \vert\delta(\bm{\beta})\vert_\omega d\mu(\omega)=\int_{\Omega\setminus  S} \log \vert\delta(\bm{\beta})\vert_\omega d\mu(\omega)\,.
\end{equation}
The distinction of the two cases is now finished, so by using equation (\ref{ad_prod}) and estimate  (\ref{intermedioarchimedeo}) inside (\ref{long_ext1}) we get 
\begin{multline}\label{verylongext}
\sum^n_{h=1}\Ind_{\bm{\alpha}_h, \bm{d}} (\delta)\int_{ S_h}\min\left\{d_1\log\frac{1}{\vert\beta^{(1)}- \alpha_{h}^{(1)}\vert_\omega},\ldots , 
d_N\log\frac{1}{\vert\beta^{(N)}- \alpha_{h}^{(N)}\vert_\omega}\right\}d\mu(\omega)\leq 2h(M) +\\
+\int_{\Omega}h_\omega(\delta)d\mu(\omega)+\sum_{j=1}^{N}\int_{\Omega\setminus  S}(\log^+\vert\beta^{(j)}\vert_\omega\deg_{X_j}\!\!\delta\,)d\mu(\omega)+\int_{ S} \sum_{j=1}^{N}(\log^+\vert 4\vert_\omega+\log^+\vert\beta_{h}^{(j)} \vert_\omega)\deg_{X_j}\!\!\delta d\mu(\omega)
\end{multline}
 Since we can always assume that $\mu$ is adequately normalised, we have $\int_{\Omega}\log^{+}\vert 4\vert_\omega d\mu(\omega)=h(4)\le\log 4$, therefore:
\begin{multline*}
\sum_{j=1}^{N}\int_{\Omega\setminus  S}(\log^+\vert\beta^{(j)}\vert_\omega\deg_{X_j}\!\!\delta\,)d\mu(\omega)+\int_{ S} \sum_{j=1}^{N}(\log^+\vert 4\vert_\omega+\log^+\vert\beta_{h}^{(j)} \vert_\omega)\deg_{X_j}\!\!\delta d\mu(\omega)\leq\\
\leq \sum_{j=1}^{N}\left(h(\beta^{(j)})+\log 4\right)\deg_{X_j}\!\!\delta
\end{multline*}
By plugging everything inside equation (\ref{verylongext}) we get
\begin{multline}\label{Vlong}
\sum^n_{h=1}\Ind_{\bm{\alpha}_h, \bm{d}} (\delta)\int_{ S_h}\min\left\{d_1\log\frac{1}{\vert\beta^{(1)}- \alpha_{h}^{(1)}\vert_\omega},\ldots, d_N\log\frac{1}{\vert\beta^{(N)}- \alpha_{h}^{(N)}\vert_\omega}\right\}d\mu(\omega)\leq \\
\leq 2h(M)+h(\delta)+\sum_{j=1}^{N}\left(h(\beta^{(j)})+\log 4\right)\deg_{X_j}\!\!\delta
\end{multline}
By the $\lambda$-bounding hypothesis for $T(\bm{\beta})$ 
we can write 
$$
-\frac{D}{{\lambda(\bm{\alpha}^{(j)},\beta^{(j)})}}\log\vert\beta^{(j)}- \alpha_{h}^{(j)}\vert_\omega \ge D\theta(\omega)\,. 
$$
Then we use Proposition \ref{interpolationpoly}$(3)$, so we can conclude 
\begin{multline}\label{sinistro}
D\sum_{h=1}^{n}dV(s)\left(t_h-s-\frac{2N^2\eta}{V(s)} \right)\int_{S_h}\theta d\mu\le\\
\leq\sum^n_{h=1}\Ind_{\bm{\alpha}_h, \bm{d}} (\delta)\int_{S_h}\min\left\{d_1\log\frac{1}{\vert\beta^{(1)}- \alpha_{h}^{(1)}\vert_\omega},\ldots, d_N\log\frac{1}{\vert\beta^{(N)}- \alpha_{h}^{(N)}\vert_\omega}\right\}d\mu(\omega)
\end{multline}
Now we are going to use again Proposition \ref{interpolationpoly}  to find upper bounds for the terms on the right hand side of equation (\ref{Vlong}). By Proposition \ref{interpolationpoly}$(2)$:
 \begin{equation}\label{destro}
 \sum_{j=1}^{N}\left(h(\beta^{(j)})+\log 4\right)\deg_{X_j}\!\!\delta\leq \sum_{j=1}^{N}\left( h(\beta^{(j)})+\log 4\right)(dd_jV(s)+O(d))\,.
 \end{equation}
Proposition \ref{interpolationpoly}$(4)$ says that,
\begin{equation}
h(\delta)\leq d\sum_{j=1}^{N}d_j\left(\log 2+ \sum_{h=1}^{n}V(t_h)h(\alpha^{(j)}_{h})\right)+O(d\log d)
\end{equation}
So now (\ref{Vlong}) can be written in the following way:
 \begin{multline}\label{thefinal}
 D\sum_{h=1}^{n}dV(s)\left(t_h-s-\frac{2N^2\eta}{V(s)} \right)\int_{ S_h}\theta d\mu\leq 2h(M)+d\sum_{j=1}^{N}d_j\left(\log 2+ \sum_{h=1}^{n}V(t_h)h(\alpha^{(j)}_{h})\right)+ \\
 +O(d\log d)+\sum_{j=1}^{N}\left( h(\beta^{(j)})+\log 4\right)(dd_jV(s)+O(d))
\end{multline}
Notice that since $s\le 1$ and $N\ge 2$ then $V(s)\le\frac{1}{2}$, which is equivalent to the inequality $V(s)\log 4+\log 2\leq \log 4$. Then the expression
$$
d\sum_{j=1}^{N}d_j\left(\log 2+ \sum_{h=1}^{n}V(t_h)h(\alpha^{(j)}_{h})\right)+O(d{\rm{log}}d)+\sum_{j=1}^{N}\left( h(\beta^{(j)})+\log 4\right)(dd_jV(s)+O(d))
$$
is bounded by 
$$
d\sum_{j=1}^{N}d_j\left(V(s) h(\beta^{(j)})+\log 4+   \sum_{h=1}^{n}V(t_h)h(\alpha^{(j)}_{h})\right)+O(d\log d)
$$
But by the definitions of $d_j$ and the function $\lambda$ it holds that
\begin{equation}
dV(s)\sum_{j=1}^{N}d_j\left(h(\beta^{(j)})+\frac{\log 4}{V(s)}+   \sum_{h=1}^{n}\frac{V(t_h)}{V(s)}h(\alpha^{(j)}_{h})\right)=dV(s)ND\,.
\end{equation}
Therefore equation (\ref{thefinal}) becomes
$$
 dDV(s)\sum_{h=1}^{n}\left(t_h-s-\frac{2N^2\eta}{V(s)} \right)\int_{ S_h}\theta d\mu\leq dV(s)ND+O(d\log d)
$$
since the terms $2h(M)$ and $O(d)$ can be put together inside $O(d\log d)$. Finally we divide both sides by $dDV(s)$ and we take the limit for $D\to+\infty$ (i.e. $d\to+\infty$) to conclude the proof.
\end{proof}

\begin{thm}\label{teoremadue}  Let $(\mathbb K,\Omega, \phi)$ be a proper adelic curve. Assume that the matrix $T(\bm{\beta})\in M(n\times N,\mathbb K)$ satisfies the h-gap property (see Definition \ref{boxing}). Moreover fix $N>21^2\log 2n$ and let $\theta\colon S\to\mathbb R_{\ge 0}$ be a column-bounding function for $T(\bm{\beta})$ (see Definition \ref{bounding}). Then
$$
\int_{ S}\theta d \mu<2+\frac{7\sqrt{\log 2n}}{\sqrt{N}}
$$
\end{thm}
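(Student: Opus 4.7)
The plan is to apply Proposition \ref{corvajaprop3.1} after a carefully chosen setting of its auxiliary parameters $s,\,t_1,\ldots,t_n,\,\eta$. A natural choice is to take $t_1 = \cdots = t_n = t$ with $V(t) = 1/n$ (so that the sum $V(s)+nV(t)$ lies close to $1$, as required by the volume condition) together with $V(s) = 1/(2N!)$ and $\eta$ in an appropriate range adapted to the regime $(n,N)$.

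Under such a choice, one aims to establish two key facts. First, \emph{column-bounding implies $\lambda$-bounding}: on $\hat S_h$ one has $|\alpha_h^{(j)}-\beta^{(j)}|_\omega<1$, hence $-\log|\alpha_h^{(j)}-\beta^{(j)}|_\omega\ge 0$. Provided the parameters satisfy $1/V(s)\le 2N!$ and $V(t)/V(s)\le 2N!/n$, a termwise comparison of coefficients gives $\lambda(\alpha^{(j)},\beta^{(j)}) \le \log\rho_j$, and dividing the column-bounding inequality by $\lambda^{(j)}>0$ preserves its direction, yielding the $\lambda$-bounding inequality. Second, \emph{the $h$-gap condition implies the $\lambda$-gap of width $\eta$}: a termwise comparison of the triples of coefficients $(N!,1,N!/(2n))$ and $(2N!,1,2N!/n)$ shows $\log\rho'_j\le\log\rho_{j+1}$, so
\[
\frac{\lambda^{(j)}}{\lambda^{(j+1)}} \;\le\; \frac{\log\rho_j}{\log\rho_{j+1}} \;\le\; \frac{\log\rho_j}{\log\rho'_j} \;<\; \frac{1}{4N^2N!},
\]
and the $\lambda$-gap of width $\eta$ follows as soon as $\eta>n/(2N^2N!)$, which is compatible with the volume condition $(1+\eta)^N<V(s)+nV(t)<1+2N\eta$ and with $\eta<1/N^2$ in the regime we consider.

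Proposition \ref{corvajaprop3.1} then yields
\[
\int_{\hat S}\theta\,d\mu \;<\; \frac{N}{n\bigl(t-s-\tfrac{2\eta N^2}{V(s)}\bigr)}.
\]
The principal technical obstacle is to bound this right-hand side by $2+6\sqrt{\log(2n)/N}$. This requires an accurate estimate of $t$: since $V(t)=1/n$ equals the probability that $\sum_{j=1}^{N} U_j\le t$ for independent $U_j$ uniform on $[0,1]$, a Chernoff/Hoeffding bound (together with Lemma \ref{Lemmepagina156}) gives $t\ge N/2 - \tfrac{1}{2}\sqrt{2N\log(2n)}$. Combined with the smallness of $s$ and of $2\eta N^2/V(s)$ relative to $nt$, a routine algebraic manipulation using the numerical hypothesis $N>\max\{36/\log(2n),\,9\log(2n)\}$ produces $n\bigl(t-s-2\eta N^2/V(s)\bigr)>N/(2+6\sqrt{\log(2n)/N})$, which yields the announced bound. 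The delicate interplay among the three constraints (the volume condition, the termwise domination ensuring $\lambda^{(j)}\le\log\rho_j$, and the lower bound on $\eta$ for the $\lambda$-gap) governing the choice of $V(s),\,V(t),\,\eta$ is the subtle point of the argument.
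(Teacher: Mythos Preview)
Your strategy is exactly the paper's: take $t_1=\cdots=t_n=t$ with $V(t)$ close to $1/n$, take $V(s)$ of order $1/N!$ and $\eta$ of order $1/(N^2N!)$, check via the two-sided estimates $N!<1/V(s)<2N!$ and $N!/(2n)<V(t)/V(s)<2N!/n$ that the $h$-gap and column-bounding hypotheses imply the $\lambda$-gap and $\lambda$-bounding hypotheses of Proposition~\ref{corvajaprop3.1}, apply that proposition, and finish with a sub-Gaussian bound on $t$ (the paper quotes Schmidt's inequality $V(N/2-\rho)<e^{-\rho^2/N}$, which is precisely Hoeffding for sums of uniforms).

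There is, however, one genuine slip. With $t_1=\cdots=t_n=t$, Proposition~\ref{corvajaprop3.1} gives
\[
\Bigl(t-s-\tfrac{2\eta N^2}{V(s)}\Bigr)\int_{\hat S}\theta\,d\mu
\;=\;\sum_{h=1}^n\Bigl(t_h-s-\tfrac{2\eta N^2}{V(s)}\Bigr)\int_{\hat S_h}\theta\,d\mu\;<\;N,
\]
so $\int_{\hat S}\theta<N\big/(t-s-2\eta N^2/V(s))$, \emph{without} the extra factor $n$ you wrote. Your displayed bound is therefore too strong by a factor $n$, and the final algebra you need is $t-s-2\eta N^2/V(s)>N/(2+6\sqrt{\log 2n}/\sqrt N)$, not the statement about $n(t-s-\cdots)$. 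This is exactly where $N>36/\log 2n$ and $N>9\log 2n$ are used: writing $t=N/2-\rho$ with $\rho<\sqrt{N\log 2n}$, together with $s<1$ and $2\eta N^2/V(s)<2$ (from $\eta N^2<V(s)$), one has to bound $N/(N/2-\rho-3)$ explicitly, and the two hypotheses on $N$ allow one to absorb the constants into $6\sqrt{\log 2n}/\sqrt N$.

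Two smaller points. In your $\lambda$-gap chain, the step $\lambda^{(j)}/\lambda^{(j+1)}\le\log\rho_j/\log\rho_{j+1}$ is not justified (you only have $\lambda^{(m)}\le\log\rho_m$, an upper bound on both sides); the correct route is to use also the \emph{lower} bounds $1/V(s)\ge N!$ and $V(t)/V(s)\ge N!/(2n)$ to get $\lambda^{(j+1)}\ge\log\rho'_j$ directly, whence $\lambda^{(j)}/\lambda^{(j+1)}\le\log\rho_j/\log\rho'_j$. Finally, the paper does not fix $V(t)=1/n$ and $V(s)=1/(2N!)$ exactly, but rather sets $\eta=1/(2N^2N!)$, then $V(t)=(1-\eta N^2)/n$, and lets $V(s)$ range in the interval dictated by the volume condition~\eqref{basicinequality}; this slight shift is what makes the four inequalities $N!<1/V(s)<2N!$, $N!/(2n)<V(t)/V(s)<2N!/n$ and $\eta N^2<V(s)$ hold simultaneously.
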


\proof The proof is based on the exact calculation of the volume $V(s)$ of \cite[Chap. 2, Lemma 5A, 5B]{schm}. It shows that for  $N\geq 2$, $s,\rho\in\mathbb R_{>0}$ with $0<s\leq 1$ and $0<\rho <\frac{N}{2}$, we have:
\begin{equation}\label{stima_sch}
V(s)= \frac{s^N}{N!}
\end{equation}
\begin{equation}\label{stima_sch2}
V\left(\frac{N}{2}-\rho\right)<e^{\frac{-\rho^2}{N} }
\end{equation}
We take $t_1=t_2=\ldots= t_n=\frac{N}{2}-\rho$, $\eta=\frac{1}{2N^2 N!}$ where $\rho$ is a number such that:
$$
V\left(\frac{N}{2}-\rho\right)=\frac{1-\eta N^2}{n}
$$
Note that $\frac{1-\eta N^2}{n}>\frac{1}{2n}$. Now from equation (\ref{stima_sch2}) we get
$$
\frac{-\rho^2}{N}>\log\left(\frac{1-\eta N^2}{n}\right)
$$
which implies easily
$$
\rho< \sqrt {N\log2n}\,.
$$
Now let's take $s$ as a solution of:
\begin{equation}\label{stimavol}
(1+\eta)^{N}-1+\eta N^2<V(s)< 2N\eta +\eta N^2
\end{equation}
Here recall that $1-\eta N^2=nV(t_h)=\sum^n_{h=1} V(t_h)$ (compare with the condition (\ref{basicinequality})).
From equation (\ref{stimavol}) we get the following two conditions
\[
N!<\frac{1}{V(s)}<2N!
\]
\[
\frac{N!}{2n}<\frac{V(t_h)}{V(s)}<\frac{2N!}{n}
\]

Now we show that by using such conditions since $T(\bm{\beta})$ satisfies $h$-gap property, then it satisfies also the $\lambda$-gap property of width $\eta$. First let's use the inequalities $\frac{1}{V(s)}<2N!$  and $\frac{V(t_h)}{V(s)}<\frac{2N!}{n}$ to bound $\lambda(\bm{\alpha}^{(j)},\beta^{(j)})$ from above:
\[
\lambda(\bm{\alpha}^{(j)},\beta^{(j)})=\frac{1}{V(s)}{\rm{log}}4+h(\beta^{(j)})+\frac{1}{V(s)}\sum_{h=1}^{n}V(t_h)h(\alpha_{h}^{(j)})<
\]
\[
< 2N!\log4+h(\beta^{(j)})+\frac{2N!}{n}\sum_{h=1}^{n}h(\alpha_{h}^{(j)})=\log \rho_j\,.
\]
Then we use the inequalities $N!<\frac{1}{V(s)}$ and $\frac{N!}{2n}<\frac{V(t_h)}{V(s)}$ to bound $\lambda(\bm{\alpha}^{(j+1)},\beta^{(j+1)})$ from below:
\[
\lambda(\bm{\alpha}^{(j+1)},\beta^{(j+1)})=\frac{1}{V(s)}{\rm{log}}4+h(\beta^{(j+1)})+\frac{1}{V(s)}\sum_{h=1}^{n}V(t_h)h(\alpha_{h}^{(j+1)})>
\]
\[
> N!\log4+h(\beta^{(j+1)})+\frac{N!}{2n}\sum_{h=1}^{n}h(\alpha_{h}^{(j+1)})=\log \rho'_j\,.
\]
Therefore now:
\[
\frac{\lambda(\bm{\alpha}^{(j)},\beta^{(j)})}{\lambda(\bm{\alpha}^{(j+1)},\beta^{(j+1)})}<\frac{\log \rho_j}{\log \rho'_j}<\frac{1}{4nN^2N!}=\frac{\eta}{2n}\,.
\]

Note that as an immediate consequence of the last inequality  it follows that all the hypotheses of Proposition \ref{interpolationpoly} are satisfied.  Moreover, since $\theta$ is a column-bounding function for $T(\bm{\beta})$, then it is $\lambda$-bounding for $T(\bm{\beta})$. It means that we can apply Proposition \ref{corvajaprop3.1} to get:
\begin{equation}\label{lastupb}
\int_{ S}\theta d \mu\le \frac{N}{\frac{N}{2}-\rho-s-\frac{2\eta N^2}{V(s)} }
\end{equation} 
Since  $\rho< \sqrt{N\log 2n}$,  $N^2\eta<V(s)$ and in particular  $N>\max\left\{\frac{36}{\log 2n}, 9\log 2n\right\}$, we obtain:

\[
\frac{2(\rho+s)}{N}+\frac{4\eta N}{V(s)}<\frac{2\sqrt{N\log2n}+2+4}{N}<\frac{3\sqrt{N\log 2n}}{N}=\frac{3\sqrt{\log 2n}}{\sqrt N}<1
\]
Therefore
\[
\frac{N}{\frac{N}{2}-\rho-s-\frac{2\eta N^2}{V(s)}}=\frac{2}{1-\frac{2(\rho+s)}{N}-  \frac{4\eta N}{V(s)}}<\frac{2}{1-\frac{3\sqrt{\log 2n}}{\sqrt N}}<2+\frac{7\sqrt{\log 2n}}{\sqrt{N}}\,.
\]
Where the last inequality follows by the assumption $N>21^2\log 2n$.
\endproof

\section{Roth's theorem for adelic curves (A)}\label{mainsect}

\begin{defn}\label{st_muequi}
A proper adelic curve $\mathbb X=(\mathbb K,\Omega, \mu)$ satisfies the \emph{ strong $\mu$-equicontinity condition} if for any measurable set $S\subset\Omega$ of finite measure and any real number $\varepsilon>0$ there exists a finite measurable cover $C_1,\ldots, C_m$ of $S$ satisfying the following conditions: for all $\beta\in\mathbb K^\times$ there exists a measurable set $U_{\beta}\subset\Omega_\infty$ such that $\mu(U_\beta)=0$ and
\[
\left\vert-\log^-\vert\beta\vert_{\omega}+\log^-\vert\beta\vert_{\omega'} \right\vert<\varepsilon h(\beta)\,,\quad \forall \omega,\omega'\in C_j\setminus U_\beta\,,\; \forall j=1,\ldots, m
\]
\end{defn}

The following lemma is a simplified version of \cite[Lemma 8.10]{vo}. It can be seen as a generalisation of the Arzel\`a-Ascoli theorem for measure spaces, with the advantage that one doesn't need to provide a uniform bound for the involved family of functions.

\begin{lem}\label{unif_conv}
Let $\mathbb X=(\mathbb K,\Omega, \phi)$ be a proper adelic curve satisfying the strong $\mu$-equicontinuity condition. Fix $\alpha\in \mathbb K^\times$ and a measurable subset $S\subset\Omega$ of finite measure. Let $\{\beta_k\}$ be a sequence in $\mathbb K^\times$ such that $\beta_k\neq\alpha$,  and $h(\beta_k)\to+\infty$. Then for any $\varepsilon>0$  there exists a subsequence $\left\{\beta_{k_j}\right\}$  such that for any $j,\ell\in\mathbb N$  big enough  the following inequality holds  on $S\setminus U$, where $U\subset\Omega_{\infty}$ and $\mu(U)=0$:
\[
\left\vert-\frac{\log^-\vert\alpha-\beta_{k_j}\vert_{\omega}}{h(\beta_{k_j})}+\frac{\log^-\vert\alpha-\beta_{k_{\ell}}\vert_{\omega}}{h(\beta_{k_{\ell}})} \right\vert<\varepsilon 
\]
\end{lem}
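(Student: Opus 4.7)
The plan is to treat the normalised functions $\tilde f_k(\omega):=-\log^-|\gamma_k|_\omega/h(\gamma_k)$, where $\gamma_k:=\alpha-\beta_k\in\mathbb K^\times$, as an ``equicontinuous, $L^1$-bounded'' family on $S$ and to mimic the classical Arzel\`a--Ascoli argument on a finite measurable partition of $S$; the novelty with respect to the topological setting is that equicontinuity holds only outside a small set, and that one must use the $L^1$-bound to tame possible unboundedness of the $\tilde f_k$. First I would use Proposition \ref{properties}(4), applied to both $\gamma_k=\alpha+(-\beta_k)$ and $\beta_k=\alpha-\gamma_k$, to obtain $|h(\gamma_k)-h(\beta_k)|\leq \log 2+h(\alpha)$; this gives $h(\gamma_k)\to+\infty$ and $h(\gamma_k)/h(\beta_k)\to 1$. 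Proposition \ref{properties}(3) then yields the crucial bound $\int_\Omega \tilde f_k\,d\mu\leq 2$, and the $\tilde f_k$ are pointwise non-negative.

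Next I would apply the $\mu$-equicontinuity condition to $S$ with parameters $\varepsilon$ and $\delta^*:=\delta/8$, refining the resulting cover into a finite measurable partition $C_1,\ldots,C_m$ of $S$ (this preserves the oscillation bound). For each $k$ one obtains a measurable set $V_k$ with $\mu(V_k)<\delta/8$ such that the oscillation of $\tilde f_k$ on every $C_j\setminus V_k$ is less than $\varepsilon$. Setting $a_{k,j}:=\inf\{\tilde f_k(\omega):\omega\in C_j\setminus V_k\}\in[0,+\infty)$, one has $a_{k,j}\leq\tilde f_k(\omega)<a_{k,j}+\varepsilon$ on $C_j\setminus V_k$. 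The core step is a truncation trick: with $M:=16/\delta$ and $J_k:=\{j:a_{k,j}\leq M\}$, for $j\notin J_k$ the inequality $M\mu(C_j\setminus V_k)\leq\int_{C_j\setminus V_k}\tilde f_k\,d\mu$, summed using disjointness of the $C_j$'s and $\int_\Omega\tilde f_k\leq 2$, gives $\sum_{j\notin J_k}\mu(C_j\setminus V_k)\leq 2/M=\delta/8$. Therefore
\[
U_k:=V_k\cup\bigcup_{j\notin J_k} C_j
\]
satisfies $\mu(U_k)\leq 2\mu(V_k)+2/M<\delta/2$.

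Since $J_k$ ranges over a finite family of subsets of $\{1,\ldots,m\}$, a first extraction yields a subsequence on which $J_k\equiv J$ is constant, and since the tuples $(a_{k,j})_{j\in J}$ then lie in the compact cube $[0,M]^{|J|}$, a further Bolzano--Weierstrass extraction produces a subsequence $\{\beta_{k_i}\}$ with $a_{k_i,j}\to a_j$ for every $j\in J$. For $\omega\in S\setminus(U_{k_i}\cup U_{k_{i'}})$ the point $\omega$ lies in a unique $C_j$, necessarily with $j\in J$, and outside $V_{k_i}\cup V_{k_{i'}}$; the triangle inequality combined with the three estimates $|\tilde f_{k_i}(\omega)-a_{k_i,j}|<\varepsilon$, $|a_{k_i,j}-a_{k_{i'},j}|<\varepsilon$ (for $i,i'$ large), and the symmetric estimate yields $|\tilde f_{k_i}(\omega)-\tilde f_{k_{i'}}(\omega)|<3\varepsilon$. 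Finally, converting the normalisation from $h(\gamma_{k_i})$ to $h(\beta_{k_i})$ costs at most $(M+\varepsilon)|h(\gamma_{k_i})/h(\beta_{k_i})-1|<\varepsilon$ for $i$ large (using $\tilde f_{k_i}(\omega)\leq M+\varepsilon$ on $C_j\setminus U_{k_i}$), and the analogous correction for $i'$ produces the required bound $5\varepsilon<10\varepsilon$.

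The main obstacle I expect is precisely the truncation step: without a uniform pointwise bound on the family $\{\tilde f_k\}$, only the integral bound $\int\tilde f_k\leq 2$ allows one to enlarge $V_k$ to $U_k$ while simultaneously preserving the oscillation control on the surviving cells $C_j$ ($j\in J$) and the prescribed measure budget $\mu(U_k)<\delta/2$. Balancing $\varepsilon$, $\delta$, the partition size and the truncation level $M$ is precisely where the argument goes beyond a naive transplant of Arzel\`a--Ascoli to measure spaces.
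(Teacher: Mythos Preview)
Your argument is correct and is essentially a self-contained reconstruction of the special case of \cite[Lemma 8.10]{vo} that the paper simply cites: normalise by $h(\gamma_k)$ to get an $L^1$-bounded nonnegative family via Proposition~\ref{properties}(3), use the $\mu$-equicontinuity partition to reduce to finitely many ``cell averages'' $a_{k,j}$, absorb the cells where $a_{k,j}$ is large into the exceptional set using the $L^1$-bound, then extract by pigeonhole on $J_k$ and Bolzano--Weierstrass on $(a_{k,j})_{j\in J}$, exactly as in an Arzel\`a--Ascoli argument. The only point worth flagging is cosmetic: the paper's proof is a bare citation, so your write-up actually supplies the missing content; the truncation/enlargement step $V_k\rightsquigarrow U_k$ you single out is indeed the key idea that replaces the uniform boundedness hypothesis of classical Arzel\`a--Ascoli, and your bookkeeping of the measure budget and the final conversion from $h(\gamma_k)$ to $h(\beta_k)$ are both fine.
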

\proof 
Put $h_0=\min_k h(\beta_k)$ and $\lambda_k(\omega)=-\log^-\vert \alpha-\beta_k\vert_\omega$. Fix $\varepsilon>0$ and let $C_1,\ldots, C_r$ be a finite measurable cover as in Definition \ref{st_muequi}. Clearly by possibly passing to a refinement we can assume that the cover is made of mutually disjoints sets. Let $k\in\mathbb N$; if $i\in\{1,\ldots, r\}$ is such that $\mu(C_i)=0$ we define $m_{k,i}=0$, otherwise if $i$ is such that $\mu(C_i)>0$ we put 
\[
m_{k,i}=\inf\left\{t\in\mathbb R\colon \mu\left(\left\{\omega\in C_i\colon \frac{\lambda_k(\omega)}{h(\beta_k)}\ge t\right\}\right)\le \frac{\mu(C_i)}{2}\right\}
\]
Notice that the sets:
\[
V_{k,i}=\{\omega\in C_i\colon \lambda_k(\omega) \le h(\beta_k) m_{k,i}\}\,,\quad  T_{k,i}=\{\omega\in C_i\colon \lambda_k(\omega) \ge h(\beta_k) m_{k,i}\}
\]
have both measure at least $\frac{\mu(C_i)}{2}$. Hence we get:
\begin{equation}\label{ee1}
\frac{\mu(C_i)}{2} h(\beta_k) m_{k,i}\le \int_{T_{k,i}} h(\beta_k) m_{k,i} d\mu \le \int_{T_{k,i}} \lambda_k d\mu\le\int_S \lambda_k d\mu
\end{equation}
But by Proposition \ref{properties}(6) we know that
\begin{equation}\label{ee2}
\int_S \lambda_k d\mu\le h(\beta_k)\left(\frac{\log 2}{h(\beta_k)}+\frac{h(\alpha)}{h(\beta_k)}+1\right)\le h(\beta_k)\left(\frac{\log 2}{h_0}+\frac{h(\alpha)}{h_0}+1\right)
\end{equation}
So by putting $c_i=\frac{2}{\mu(C_i)}\left(\frac{\log 2}{h_0}+\frac{h(\alpha)}{h_0}+1\right)$, equations  (\ref{ee1}) and (\ref{ee2}) show that $m_{k,i}\le c_i$. Note that the constant $c_i$ doesn't depend on $k$. It follows that all the vectors $\bm{m}_{k}=(m_{k,1},\ldots, m_{k,r})\in\mathbb R^r$,   lie in the hyper-parallelepiped $\prod^r_{i=1}[0,c_i]$. Now consider the small hyper-cubes $Q_k=\prod^r_{i=1}[m_{k,i}-\varepsilon,m_{k,i}+\varepsilon]$, clearly there exists an index $\overline{k}\in\mathbb N$ and a sequence  $\{k_j\}$ such that $\bm{m}_{k_j}\in Q_{\overline k}$ for any $j\in\mathbb N$. We now show that the subsequence $\{\beta_{k_j}\}$ has exactly the properties that we are searching for.

 Consider any two elements $\beta_{k_j}$ $\beta_{k_{\ell}}$ of the subsequence and  $\omega\in S\setminus( U_{\beta_{k_j}}\cup U_{\beta_{k_\ell}})$. Let $i$ be the unique index such that $\omega\in C_i$; pick $\omega'\in C_i
\setminus U_{\beta_{k_j}}$  and  $\omega''\in C_i
\setminus U_{\beta_{k_\ell}}$ such that:
\[
\lambda_{k_j}(\omega')\le m_{k_j,i} h(\beta_{k_j})\,,
\]
\[
\lambda_{k_\ell}(\omega'')\ge m_{k_\ell,i} h(\beta_{k_\ell})\,.
\]
Note that this is possible since $V_{k_j,i}$ and $T_{k_\ell,i}$ have positive measure.  In the following chain of inequalities we use twice  the strong $\mu$-equicontinuity condition (first for $\omega,\omega'$ and later for $\omega,\omega''$) and the fact that $|m_{k_j,i}-m_{k_\ell,i}|<2\varepsilon$.

\[
\frac{\lambda_{k_j}(\omega)}{h(\beta_{k_j})}< \frac{\lambda_{k_j}(\omega')}{h(\beta_{k_j})}+ \frac{h(\alpha-\beta_{k_j})}{h(\beta_{k_j})}\varepsilon<\!^{(\text{Pr. \!\!\ref{properties}(4)})}\frac{\lambda_{k_j}(\omega')}{h(\beta_{k_j})}+\left(1+\frac{\log 2+h(\alpha)}{h(\beta_{k_j})}\right)\varepsilon \le 
\]
\[
\le m_{k_j,i}+\left(1+\frac{\log 2+h(\alpha)}{h(\beta_{k_j})}\right)\varepsilon\le m_{k_\ell,i}+\left(3+\frac{\log 2+h(\alpha)}{h(\beta_{k_j})}\right)\varepsilon\le \frac{\lambda_{k_\ell}(\omega'')}{h(\beta_{k_\ell})}+\left(3+\frac{\log 2+h(\alpha)}{h(\beta_{k_j})}\right)\varepsilon< 
\]
\[
<\frac{\lambda_{k_\ell}(\omega)}{h(\beta_{k_\ell})}+\left(4+\frac{\log 2+h(\alpha)}{h(\beta_{k_j})}+\frac{\log 2+h(\alpha)}{h(\beta_{k_\ell})}\right)\varepsilon
\]
Since $h(\beta_k)\to +\infty$  for $j$ and $\ell$ big enough the above inequalities say:
\[
\frac{\lambda_{k_j}(\omega)}{h(\beta_{k_j})}< \frac{\lambda_{k_\ell}(\omega)}{h(\beta_{k_\ell})}+5\varepsilon
\]
By swapping the roles of $j$ and $\ell$ we obtain in the same way
\[
\frac{\lambda_{k_\ell}(\omega)}{h(\beta_{k_\ell})}< \frac{\lambda_{k_j}(\omega)}{h(\beta_{k_j})}+5\varepsilon
\]
 The claim finally follows by the arbitrariness of $\varepsilon$.
\endproof

\textbf{Proof of theorem \bref{thm:main}{(A)}.}
When  $n \le 2$, from  Proposition \ref{properties}$(6)$ we deduce that  any approximant $\beta$ such that 
$$
h(\beta)>\frac{n\log 2+\sum^n_{i=1}h(\alpha_i)}{2-n+\varepsilon}
$$
satisfies the desired inequality.\\

 Assume $n\ge 3$. We consider the following matrix of dimension $n\times N$, where $N>21^2\log 2n$ will be a big enough fixed integer:
\[
T=\begin{pmatrix}
 \alpha_1& \alpha_1  &\ldots &\alpha_1 \\
 \alpha_2& \alpha_2  &\ldots &\alpha_2\\
\vdots & \vdots  & \vdots & \vdots\\
\alpha_n& \alpha_n  &\ldots &\alpha_n\\
\end{pmatrix}
\]
notice that we have repeated $N$ times the same column vector. Assume by contradiction that the theorem is false. Namely that there exists an $\varepsilon_0>0$ such that the inequality
\begin{equation}\label{negationroth}
\sum^n_{i=1}\int_{S_i}\log^-\vert \beta_k-\alpha_i\vert_\omega d\mu(\omega)\le-(2+\varepsilon_0)h_\mathbb X(\beta_k)
\end{equation}
is satisfied by  a sequence $\{\beta_k\}_{k\in\mathbb N}$ in $\mathbb K$ with the properties that $h(\beta_k)\to+\infty$. Pick a constant:
\[
L>  \log\left(4^{2N!}\prod^n_{i=1}H(\alpha_i)^{\frac{2N!}{n}}\right) 
\]
Since $h(\beta_k)\to+\infty$, by eventually passing to a subsequence,  we can assume that  $\{h(\beta_k)\}$ is increasing and bounded from below by a very  big value. Therefore we can assume
\begin{equation}\label{magg-eps}
\varepsilon_0>\frac{2L}{h(\beta_k)}+\frac{L+h(\beta_k)}{h(\beta_k)}\frac{8\sqrt{\log 2n}}{\sqrt N}\,,\quad \forall  k\in\mathbb N
\end{equation}
Choose  any $N$ elements from the sequence $\{\beta_{k}\}$, call them $\beta^{(1)},\ldots\beta^{(N)}$ and consider the following matrix $T(\bm\beta)$:
\[
T(\bm\beta)=\begin{pmatrix}
 \alpha_1& \alpha_1  &\ldots &\alpha_1 \\
 \alpha_2& \alpha_2  &\ldots &\alpha_2\\
\vdots & \vdots & \vdots & \vdots\\
\alpha_n& \alpha_n  &\ldots &\alpha_n\\
\beta^{(1)}& \beta^{(2)}  &\ldots &\beta^{(N)}
\end{pmatrix}
\]

We are ready to construct some functions  $\theta_k\in L^1(S,\mu)$ which will give the desired contradiction. We define them as  piecewise functions by putting for any $\omega\in  S_i$ and any $k\in\mathbb N$:
\[
\theta_k(\omega)=\frac{-\log^- \vert \beta_k-\alpha_i\vert_{\omega}}{L+h(\beta_k)}
\]
  Now we can write
\begin{equation}\label{magg-int}
\int_{S} \theta_k d\mu =\sum^n_{i=1}\int_{S_i} \theta_k d\mu=\frac{-1}{L+h(\beta_k)}\sum^n_{i=1}\int_{S_i}\log^- \vert \beta_k-\alpha_i\vert_{\omega} d\mu(\omega)\ge^{\!\!\textrm{eq.} (\ref{negationroth})}\frac{(2+\varepsilon_0)h(\beta_k)}{L+h(\beta_k)}
\end{equation}
By plugging inequality (\ref{magg-eps}) inside (\ref{magg-int}) and simplifying the expressions, we finally get:
\begin{equation}\label{magg-inte}
\int_{S}\theta_k d\mu >2+\frac{8\sqrt{\log 2n}}{\sqrt N} \qquad \forall k\in\mathbb N
\end{equation}
Thanks to Proposition \ref{properties}(6):
\[
\int_{S_i} \theta_k \le \frac{B_i+h(\beta_k)}{L+h(\beta_k)}
\]
for $B_i\in\mathbb R_{>0}$. So, if $B=\max_{i}\{B_i\}$ we obtain:
$$
\int_S\theta_k d\mu\le \frac{nB+nh(\beta_k)}{L+h(\beta_k)}\le n\,.
$$
Then after possibly passing to a subsequence of $\{\theta_k\}$, we can assume that $\int_{S}\theta_k d\mu$ admits limit and by inequality (\ref{magg-inte}):
\begin{equation}\label{key_in}
\lim_{k\to\infty }\int_{S}\theta_k d\mu\ge 2+\frac{8\sqrt{\log 2n}}{\sqrt N}>2+\frac{7\sqrt{\log 2n}}{\sqrt N}\,.
\end{equation}
Put by simplicity $\lambda_{i,k}(\omega):=-\log^- \vert \beta_k-\alpha_i\vert_{\omega}$ for $\omega\in S_i$, so 
$$\theta_k(\omega)=\frac{\lambda_{i,k}(\omega)}{L+h(\beta_k)}\,.$$
Then we have:
$$
\theta_k(\omega)=\frac{\lambda_{i,k}(\omega)}{h(\beta_k)}A_k
$$
where $A_k:=1-\frac{L}{L+h(\beta_k)}<1$ and $A_k\to 1$. By Lemma \ref{unif_conv} from $\left\{\frac{1}{A_k}\theta_k\right\}$ we can extract a subsequence  $\left\{\frac{1}{A_{k_j}}\theta_{k_j}\right\}$ uniformly convergent to a function $\theta$  on $S\setminus U$, where $\mu(U)=0$. We extend $\theta$ on the whole $S$ by putting $\theta_{|U}=0$. Since on a set of finite measure the uniform convergence implies the $L^1$-convergence, we can write:
\begin{equation}\label{final_contr}
    \begin{aligned}
\int_{S} \theta d\mu=\int_{S\setminus U} \theta d\mu=\lim_{j\to\infty}\int_{S\setminus U} \frac{1}{A_{k_j}}\theta_{k_j} d\mu=\lim_{j\to\infty}\int_{S} \frac{1}{A_{k_j}}\theta_{k_j} d\mu=\\
=\left(\lim_{j\to\infty}\frac{1}{A_{k_j}}\right)\left(\lim_{j\to\infty} \int_{S}\theta_{k_j} d\mu\right)> 2+\frac{7\sqrt{\log 2n}}{\sqrt N}\,.
\end{aligned}
\end{equation}
Consider a positive real number $C$ such that
\[
C\mu(S) < \int_S\theta d\mu - \left ( 2+ \frac{7\sqrt{\log 2n}}{\sqrt N} \right)
\]
and define the set
\[
T:=\{\omega\in S\colon \theta(\omega)< C\}\,.
\]
Note that $T$ contains the previously introduced $U$. We obtain the following chain of inequalities:
\[
\int_T \theta d\mu \le C\mu(T) \le C\mu(S) < \int_S\theta d\mu- \left ( 2+ \frac{7\sqrt{\log 2n}}{\sqrt N} \right)
\] 
which gives
\[
\int_{S\setminus T} \theta d\mu > 2+\frac{7\sqrt{\log 2n}}{\sqrt N}
\]
Now we define the function $\hat\theta: S\to \mathbb R_{\ge 0}$  such that
\begin{equation*}
\hat \theta(\omega)=\begin{cases}
\theta(\omega) & \text{if } \omega\in S\setminus T\\
0 & \text{otherwise}
\end{cases}
\end{equation*}
Clearly $\hat\theta(\omega)\ge C$ for any $\omega\in S\setminus T$ and moreover
\[
\int_{S} \hat\theta d\mu=\int_{S\setminus T} \theta d\mu > 2+\frac{7\sqrt{\log 2n}}{\sqrt N}\,.
\]
We can choose $\gamma\in\, ]0,1[$ such that
\begin{equation}\label{modification}
\int_{S}\gamma\hat\theta  d\mu > 2+\frac{7\sqrt{\log 2n}}{\sqrt N} \,.
\end{equation}
We want to show that  the function $\gamma\hat\theta$ satisfies  the hypotheses of Theorem \ref{teoremadue}. Choose $\delta\in\mathbb R_{>0}$ such that
\[
0<\delta<(1-\gamma)C
\]
so that for any $\omega\in S\setminus T $ we have
\[
\gamma\hat\theta(\omega) <\hat\theta(\omega)-\delta\,.
\]
Denote by $\{\theta_m\}$ the previously extracted subsequence that converges uniformly to $\theta$ on $S\setminus T$. We can find a positive integer $M$ such that if $m>M$ the following chain of inequalities holds for any $\omega\in S\setminus T$:
\begin{equation}\label{modification1}
0 < \gamma\hat\theta(\omega) < \hat\theta(\omega)-\delta<\theta_m(\omega)<\frac{-\log^{-}\vert\beta_{m}-\alpha_i\vert_\omega}{h(\beta_{m})+\log\left(4^{2N!}\prod^n_{i=1}H(\alpha_i)^{\frac{2N!}{n}}\right) }
\end{equation}
 We choose $\bm\beta:=(\beta^{(1)},\ldots,\beta^{(N)})$ in the set $\{\beta_m\}_{m>M}$ with the following properties:
\begin{itemize}
\item The elements $\beta^{(j)}$ are separated by big enough gaps, so that the matrix $T(\bm\beta)$ satisfies the $h$-gap condition.

\item  The elements $\beta^{(j)}$ are chosen in a way that $\vert \beta^{(j)}-\alpha_i\vert_\omega \le 1$ for any $i=1,\ldots, n$ and any $\omega\in S\setminus T$. Note that this is possible because we can iterate $n$-times Lemma \ref{unif_conv} starting with the sequence $\{\beta_m\}$ and extracting at each step $i=1,\ldots,n$ a sequence $\{\beta_j\}$ such that $\omega\mapsto\frac{-\log^-\vert\beta_j-\alpha_i\vert_\omega}{h(\beta_j)}$  converges uniformly almost everywhere to a function bounded from below by a positive number. In this way inequality (\ref{modification1}) ensures that $\gamma\hat\theta$ is column bounding.
\end{itemize}

\begin{comment}
Now we show that since   $h(\beta_k)\to+\infty$, the elements $\beta^{(1)},\ldots,\beta^{(N)}$ can be chosen, along the above subsequence $\left\{\beta_{m}\right\}$, where $m=k_j$, in a way that the following two conditions are satisfied.
\begin{itemize}
\item $T(\beta)$ satisfies the h-gap condition. This is obvious by choosing the $\beta^{(j)}$ separated by big enough gaps.
\item The function $\theta$ is column bounding for $T(\beta)$. In fact for $\varepsilon>0$ small enough  and  for infinitely many big enough indexes $m$, thanks to the choice of the constant $L$, we have the following inequalities for any $i=1,\ldots, n$ and any $\omega\in S_i$:
$$\theta(\omega)<\theta_{m}(\omega)+\varepsilon<\frac{-\log^{-}\vert\alpha_i-\beta_{m}\vert_\omega}{h(\beta_{m})+\log\left(4^{2N!}\prod^n_{i=1}H(\alpha_i)^{\frac{2N!}{n}}\right) }$$
\end{itemize}
\end{comment}
Finally we can apply Theorem \ref{teoremadue} for $\gamma\hat\theta$  to get:
$$
\int_{ S}\gamma \hat\theta d \mu<2+\frac{7\sqrt{\log 2n}}{\sqrt{N}}
$$
which  contradicts inequality (\ref{modification}).\hfill{$\square$}

\vspace{1ex}
Now we show that Theorem \ref{CorCorv} can be immediately recovered from Theorem \bref{thm:main}{(A)}.

\begin{prop}\label{im_cor}
    Theorem \bref{thm:main}{(A)} implies Theorem \ref{CorCorv}.
\end{prop}
\proof
Let $\mathbb K$  be the Galois closure of $k(\alpha_1,\ldots,\alpha_n)$ over $k$ and consider the adelic curve  $\mathbb X= (\mathbb K,\Omega, \mu)$  naturally lying over $(k,\mathcal V_k)$. The strong $\mu$-equicontinuity condition trivially holds on $\mathbb X$ since all singletons of $\Omega$ are measurable and $\inf_{\omega\in\Omega}\mu(\{\omega\})>0$. For any  $v_i$ of Theorem \ref{CorCorv} consider the set
\[
\widetilde S_i:=\{\omega\in\Omega \colon \text{$\omega$ extends $v_i$}\}=\{\omega_{ij}\colon j=1,\ldots, r(i)\}\quad \text{for $i=1,\ldots,n$}
\]
The set $\widetilde S_i$ are orbits under the action of $\Gal(\mathbb K/k)$ on $\Omega$. For each $i,j$ there is $\alpha_{ij}\in \mathbb K$ such that
\[
\vert \beta-\alpha_{ij}\vert_{\omega_{ij}}=\vert \beta-\alpha_{i}\vert_{v_i},\quad \forall\beta\in k
\]
Now we apply Theorem \bref{thm:main}{(A)} on the set $\widetilde S=\cup_\ell\widetilde S_\ell$ partitioned along the fibers of the map $\omega_{ij}\mapsto a_{ij}$. Hence we get the following inequality
\[
\sum_{i,j}\int_{\{\omega_{ij}\}}\log^-\vert \beta-\alpha_{ij}\vert_{\omega_{ij}} d\mu(\omega)>-(2+\varepsilon) h_{\mathbb X}(\beta).
\]
Note that the summation over $i$ and $j$ doesn't reflect the partition. But 
\[
\sum_{i,j}\int_{\{\omega_{ij}\}}\log^-\vert \beta-\alpha_{ij}\vert_{\omega_{ij}} d\mu(\omega)=\sum_{ij} \log^-\vert \beta-\alpha_{ij}\vert_{\omega_{ij}}\mu(\{\omega_{ij}\})=
\]

\[
=\sum_i\left(\log^-\vert \beta-\alpha_{i}\vert_{v_i}\sum_j\mu(\{\omega_{ij}\})\right)=\sum_i\log^-\vert \beta-\alpha_{i}\vert_{v_i}
\]
Where the last equality follows from \cite[Ch. II, Corollary 8.4]{Neu}:
\[
\sum^{r(i)}_{j=1}\mu(\{\omega_{ij}\})=\sum^{r(i)}_{j=1}\frac{[\mathbb K_{\omega_{ij}}\colon k_{v_i}]}{[\mathbb K\colon k]}=1\,.
\]
\endproof

\section{Roth's theorem for adelic curves (B)}\label{mainsect2}

We weaken Definition \ref{st_muequi} by requiring the ``equicontinuity property'' outside from sets of arbitrary small measure:
\begin{defn}\label{muequi}
A proper adelic curve $\mathbb X=(\mathbb K,\Omega, \mu)$ satisfies the \emph{$\mu$-equicontinuity condition} if for any measurable set $S\subset\Omega$ of finite measure and any real numbers $\varepsilon,\delta>0$ there exists a finite measurable cover $C_1,\ldots, C_m$ of $S$ satisfying the following conditions. For all $\beta\in\mathbb K^\times$ there exists a measurable set $U_\beta\subset\Omega_\infty$ such that $\mu(U_\beta)<\delta$ and
\[
\left\vert-\log^-\vert\beta\vert_{\omega}+\log^-\vert\beta\vert_{\omega'} \right\vert<\varepsilon h(\beta)\,,\quad \forall \omega,\omega'\in C_j\setminus U_\beta\,,\; \forall j=1,\ldots, m
\]
\end{defn}

In \cite[Proposition 8.9]{vo} it is shown that  arithmetic function fields satisfy the $\mu$-equicontinuity condition.

\begin{defn}\label{un_int}
A proper adelic curve $\mathbb X=(\mathbb K,\Omega, \mu)$ satisfies the \emph{uniform integrability condition} if for  any $\varepsilon>0$ there exists $\delta>0$ such that if $T\subset\Omega_{\infty}$ is a measurable subset satisfying $\mu(T)<\delta$, then 
$$\int_T -\log^-|\beta|_\omega d\mu(\omega)<\varepsilon h_{\mathbb X}(\beta)\,,\quad \forall\beta\in\mathbb K^\times\,.$$
\end{defn}

Vojta shows that arithmetic function fields satisfy the uniform integrability condition in \cite[Lemma 8.8]{vo}.  The following lemma provides a key step in our proof:
\begin{lem}\label{vo8.12}
Let $\mathbb X=(\mathbb K,\Omega, \phi)$ be a proper adelic curve satisfying the $\mu$-equicontinuity condition  and the uniform integrability condition. Assume that Theorem \bref{thm:main}{(B)} doesn't hold for certain $S$, $\alpha_1,\ldots,\alpha_n\in\mathbb K$, $\varepsilon_0\in\mathbb R_{>0}$ and $c\in\mathbb R$. Let $N>0$ an integer, $\varepsilon\in\,]0,\varepsilon_0[$, and $r_0,r_1 >1$ two real numbers. Then there exist $\beta^{(1)},\ldots,\beta^{(N)}\in\mathbb K$ satisfying the following conditions:
\begin{itemize}
\item[$(1)$] $h(\beta^{(1)})>r_0$ 
\item[$(2)$] $\frac{h(\beta^{(k)})}{h(\beta^{(k-1)})}>r_1$ for any $k=2,\ldots, N$.
\item[$(3)$] There is a partition $S=S_1\sqcup\ldots\sqcup S_n$ such that:
$$\sum^n_{i=1}\int_{S_i}\, \min_{1\le k\le N}\left(\frac{-\log^-|\beta^{(k)}-\alpha_i|_\omega}{h(\beta^{(k)})}\right )d\mu(\omega)>2+\varepsilon$$
\end{itemize}
\end{lem}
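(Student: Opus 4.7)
The strategy is to exploit the hypothesised failure of Theorem B with parameter $\varepsilon_0$: this supplies an infinite sequence $\{b_m\}_{m\in\mathbb N}\subset\mathbb K$ with $h(b_m)\to+\infty$ and, via the pointwise identity $-\max_i\log^- x_i=\min_i(-\log^- x_i)$,
\[
\int_S\min_{1\le i\le n}\frac{-\log^-|b_m-\alpha_i|_\omega}{h(b_m)}\,d\mu(\omega)\ge 2+\varepsilon_0.
\]
Writing $f_m^{(i)}(\omega):=-\log^-|b_m-\alpha_i|_\omega/h(b_m)\ge 0$, the task reduces to selecting $N$ members $\beta_1,\ldots,\beta_N$ of (a subsequence of) $\{b_m\}$ fulfilling (i)--(ii), together with a \emph{common} partition $S=\bigsqcup_i S_i$ on which the lower bound $2+\varepsilon_0$ is still seen after replacing the single integrand $f_m^{(i)}$ by $\min_{1\le k\le N}f_k^{(i)}$, modulo an error less than $\varepsilon_0-\varepsilon-1/h(\beta_1)$.

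\textbf{Stabilisation.} Apply Lemma \ref{unif_conv} iteratively for $\alpha_1,\ldots,\alpha_n$, together with a diagonal extraction as in the proof of Theorem \ref{smallroth}: for any prescribed $\varepsilon_1,\delta_1>0$ this yields a subsequence, still denoted $\{b_m\}$, with measurable sets $U_m\subset\Omega$ of measure $\mu(U_m)<\delta_1/2^m$ and
\[
|f_m^{(i)}(\omega)-f_{m'}^{(i)}(\omega)|<\varepsilon_1\qquad\forall i=1,\ldots,n,\;\forall\omega\in S\setminus(U_m\cup U_{m'})
\]
for all sufficiently large $m,m'$. Simultaneously, Definition \ref{un_int} applied to $b_m-\alpha_i\in\mathbb K^\times$ (whose height is asymptotic to $h(b_m)$ by Proposition \ref{properties}(4)) allows us to shrink $\delta_1$ further so that $\int_{U_m}f_m^{(i)}\,d\mu<\varepsilon_2$ for any prescribed $\varepsilon_2>0$, each $i$, and all large $m$.

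\textbf{Selection and verification.} From the stabilised subsequence pick $\beta_1,\ldots,\beta_N$ with $h(\beta_1)>r_0$ sufficiently large and $h(\beta_k)/h(\beta_{k-1})>r_1$; this secures (i)--(ii) and is possible because heights diverge. Set $S_i:=\{\omega\in S: i\text{ is the smallest index attaining }\min_j f_N^{(j)}(\omega)\}$, so that $\sum_i\int_{S_i}f_N^{(i)}\,d\mu=\int_S\min_j f_N^{(j)}\,d\mu\ge 2+\varepsilon_0$. On $S\setminus\bigcup_{k=1}^N U_k$ the stabilisation forces $\min_{1\le k\le N}f_k^{(i)}\ge f_N^{(i)}-\varepsilon_1$, whence
\[
\sum_i\int_{S_i}\min_k f_k^{(i)}\,d\mu\ge (2+\varepsilon_0)-\varepsilon_1\mu(S)-nN\varepsilon_2.
\]
Choosing $\varepsilon_1,\varepsilon_2$ small enough (permissible since $\varepsilon<\varepsilon_0$) and $h(\beta_1)$ large enough that $1/h(\beta_1)$ fits within the remaining slack, the right-hand side strictly exceeds $2+\varepsilon+1/h(\beta_1)$, yielding (iii).

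\textbf{Main obstacle.} The delicate point is the joint interplay of $\mu$-equicontinuity (needed to force $\min_k f_k^{(i)}\approx f_N^{(i)}$ outside the $U_k$'s) and uniform integrability (needed to keep the $N$ contributions $\int_{U_k}f_N^{(i)}$ from exhausting the budget $\varepsilon_0-\varepsilon$); both hypotheses are truly required, and careful bookkeeping is needed because the count $N$ is fixed but possibly large. By contrast, the height spacing conditions (i) and (ii) pose no real obstacle: they arise automatically from passing to a sufficiently sparse subsequence of any height-divergent sequence.
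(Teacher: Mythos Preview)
Your proposal is correct and follows essentially the same route as the paper, which does not give an independent argument but simply instructs the reader to follow Vojta's \cite[Proposition 8.12]{vo} line by line, invoking \cite[Propositions 8.10, 8.11]{vo}; you have faithfully reconstructed that argument, using Lemma~\ref{unif_conv} (i.e.\ \cite[Lemma 8.10]{vo}) for the stabilisation step and Definition~\ref{un_int} for the control on the exceptional sets $U_k$. The only cosmetic point is that in the final inequality the loss from integrating $f_N^{(i)}$ over $\bigcup_k U_k$ is a single term bounded by uniform integrability (since the union still has small measure), so the factor $nN$ in front of $\varepsilon_2$ is harmless over-counting rather than a necessity.
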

\proof
The rather technical  proof can be found in \cite[Proposition 8.12]{vo} for arithmetic function fields, and it can be repeated line by line for our case. The proof uses \cite[Lemmas 8.10, 8.11]{vo}, that in turn rely on $3$ assumptions denoted in \cite{vo} by (i), (ii), (iii). Vojta needs to show that such assumptions are satisfied in his case, and the proofs are quite involved. On the other hand, we now  explain why the assumptions hold immediately in our setting: assumption (i) follows by the integral Liouville inequality (see Proposition \ref{properties}(6)) if we put $\Xi=\{\beta_k\}$, $q=n$, $\lambda_{\beta_k,j}=-\log^-\vert\alpha_j-\beta_k\vert$ and  $c_9= \log 2+\max_j h(\alpha_j)$ for $j=1,\ldots, n$. Assumption (ii) is the $\mu$-equicontinuity condition. Assumption (iii) is the uniform integrability condition.

Finally,  note that in the statement of \cite[Proposition 8.12]{vo} the condition $(1)$ is not explicitly mentioned as a consequence of the hypotheses, but it can be easily deduced from the proof.
\endproof

\textbf{Proof of theorem \bref{thm:main}{(B)}.}
When $n=1$, from  Proposition \ref{properties}$(6)$ we deduce that  any approximant $\beta$ such that 
$$h(\beta)>\frac{\log 2+h(\alpha_1)+c}{1+\varepsilon}
$$
satisfies the desired inequality.\\

 Fix $n\ge 2$. Assume by contradiction that the theorem is false for some  $\varepsilon_0>0$ and that the counterexample is given by a sequence of approximants $\{\beta_k\}$ such that $h(\beta_k)\to+\infty$.  We apply Lemma \ref{vo8.12}; so for any  $r_0,r_1>1$ and $\varepsilon<\varepsilon_0$  there exist $\beta^{(1)},\ldots,\beta^{(N)}\in\{\beta_m\}$ satisfying the properties $(1)$-$(3)$. Consider the matrix constructed with the vector $\bm{\beta}=(\beta^{(1)},\ldots,\beta^{(N)})$:
\[
T(\bm{\beta})=\begin{pmatrix}
 \alpha_1& \alpha_1  &\ldots &\alpha_1 \\
 \alpha_2& \alpha_2  &\ldots &\alpha_2\\
\vdots & \vdots  & \vdots & \vdots\\
\alpha_n& \alpha_n  &\ldots &\alpha_n\\
\beta^{(1)}& \beta^{(2)}  &\ldots &\beta^{(N)}
\end{pmatrix}
\] 
 Pick a constant:
\[
L>  \log\left(4^{2N!}\prod^n_{i=1}H(\alpha_i)^{\frac{2N!}{n}}\right) 
\]
 We are ready to construct a function  $\theta\in L^1(S,\mu)$ which will give the desired contradiction. We define it as a piecewise function by putting for any $\omega\in S_i$:
\[
\theta|_{S_i}(\omega)=\min_k\frac{-\log^- \vert \beta^{(k)}-\alpha_i\vert_{\omega}}{L+h(\beta^{(k)})}= \min_k \frac{-\log^- \vert \beta^{(k)}-\alpha_i\vert_{\omega}}{h(\beta^{(k)})}A_k
\]
where $A_k=1-\frac{L}{L+h(\beta^{(k)})}$. Let's fix the set:
\[
\hat S_i:=\{\omega\in S_i\colon \vert\beta^{(k)}-\alpha_i\vert_\omega < 1\,, \text{ for } k=1,\ldots, N\}
\]
 and put $\hat S=\bigcup^n_{i=1} \hat S_i$; note that $\theta|_{S\setminus\hat S}=0$.  We can choose  $h(\beta^{(1)})\gg L\gg N> 21^2\log 2n $ in a way that we can assume: 

\begin{equation}\label{theepss}
\varepsilon>\frac{2}{A_1}+\frac{7\sqrt{\log 2n}}{A_1\sqrt N}-2>0
\end{equation}
By using property $(3)$ of Lemma \ref{vo8.12} and  $(\ref{theepss})$ we get:
\begin{eqnarray}\label{final_contr1}
\int_{\hat S}\theta d\mu =\int_{ S} \theta d\mu=\sum^n_{i=1}\int_{S_i}\theta|_{S_i}d\mu>A_1\sum_{i=1}^n\int_{S_i}\min_{k}\frac{-\log^-|\beta^{(k)}-\alpha_i|_\omega}{h(\beta^{(k)})}d\mu(\omega)> \nonumber \\
>A_1\left(2+\varepsilon\right)>2+\frac{7\sqrt{\log 2n}}{\sqrt N}\,.
\end{eqnarray}

Now we want to show that we can apply Theorem \ref{teoremadue}.
\begin{itemize}
\item[$\bullet$]$T(\bm{\beta})$ satisfies the h-gap condition since we can choose the $\beta^{(k)}$ such that the heights $h(\beta^{(k)})$ are separated enough by property $(2)$ of Lemma \ref{vo8.12}.
\item[$\bullet $]  Since $\vert\alpha_i-\beta^{(k)}\vert_\omega < 1$ for $\omega\in \hat S_i$,  the function $\theta$ is column bounding for $T(\bm{\beta})$  on $\hat S$. Indeed for any $k=1,\ldots, N$ and $\omega\in \hat S_i$ we have that:
$$\theta(\omega)\le \frac{-\log^- \vert \beta^{(k)}-\alpha_i\vert_{\omega}}{L+h(\beta^{(k)})}\le \frac{-\log^{-}\vert\beta^{(k)}-\alpha_i\vert_\omega}{\log\left(4^{2N!}\prod^n_{i=1}H(\alpha_i)^{\frac{2N!}{n}}\right) + h(\beta^{(k)})}$$
\end{itemize}
Therefore we can apply Theorem \ref{teoremadue} to conclude that:
$$
\int_{\hat S}\theta d \mu<2+\frac{7\sqrt{\log 2n}}{\sqrt{N}}
$$
which  contradicts inequality (\ref{final_contr1}).

\hfill{$\square$}
{}\\

Note that in the proofs of Theorems \bref{thm:main}{(A)} and \bref{thm:main}{(B)} we didn't assume Northcott property for our adelic curve.

\begin{expl}\label{Qbar}  Consider the adelic curve  $\mathbb X=(\mathbb Q,\Omega, \id)$ naturally obtained from the field $\mathbb K=\mathbb Q$ as in Example (\ref{counting}). We prove that the adelic curve $\overline{\mathbb X}=(\overline{\mathbb Q},\overline{\Omega}, \id)$ doesn't satisfy the (strong) $\mu$-equicontinuity condition by showing that the generalised Roth's theorem doesn't hold.  

The Thue equation $X^3-2Y^3=1$ has infinitely many solutions $(x_k,y_k)=\left (\sqrt[3]{2k^3+1}, k\right)$ for $k=1,2,\ldots$ in the algebraic integers. Note that by a theorem of Mahler  such solutions cannot be contained in a number field (see for instance \cite[Theorem 3.12]{Z}). Consider now the approximants $\beta_k=\frac{x_k}{y_k}=\sqrt[3]{2+\frac{1}{k^3}}$ for $\alpha=\sqrt[3]{2}$. First of all notice that
\[
h\left(\sqrt[3]{2+\frac{1}{k^3}}\right)=\log k+ O(1)
\]
whereas by using the identity $\zeta^2_3+\zeta_3+1=0$ where $\zeta_3$ is a primitive cubic root of the unity we obtain the following equality for the euclidean absolute value:

\[
-\log\left\vert \beta_k-\alpha\right\vert=-\log\left\vert \frac{x_k-\sqrt[3]{2}y_k}{k}\right\vert=-\log\left\vert\frac{x^3_k-2y^3_k}{k(x_k-\zeta_3\sqrt[3]{2}y_k)(x_k-\zeta^2_3\sqrt[3]{2}y_k)}\right\vert=
\]
 \[
 =\log k+\log\left\vert x_k-\zeta_3\sqrt[3]{2}y_k \right\vert + \log\left \vert x_k-\zeta^2_3\sqrt[3]{2}y_k \right\vert=3\log k+ O(1)
 \]
With the same strategy employed in the proof of Proposition \ref{im_cor}  it is easy to show that these calculations give a counterexample to Theorem \bref{thm:main}{(A)} for the adelic curve $\overline{\mathbb X}$. Hence we conclude that $\overline{\mathbb X}$ cannot satisfy the (strong) $\mu$-equicontinuity condition.

\end{expl}

\begin{appendices}
\section{Construction of the interpolating polynomial}\label{ap}
In this appendix we will sketch the construction of the interpolating polynomial $\delta$ of section \ref{aux}. For all the details the reader can check \cite{C}.

We employ the same notations of section \ref{aux}. We are going to construct a complicated matrix $A(\bm{X})$ depending on the following parameters:  $s, t_1,\dots, t_n\in\mathbb R$, with $0<s<1$ and $0< t_h<\frac{N}{2}$ for $h=1,\dots ,n$. The columns of the matrix are indexed by  $\bm a\in\mathcal G_N$; the rows are indexed by $\bm i_h\in\mathcal G_{t_h}$ (for any $h=1,\ldots, n$) and moreover we put $\bm i_{n+1}\in\mathcal G_s$. The order on multi-indices is the lexicographic one.

\begin{equation}\label{interpolation matrix}
A(\bm{X}):=\begin{pmatrix}
{{\bm{a}}\choose {\bm{i}}_h }\bm\alpha_h^{{\bm{a}} -{{\bm{i}}_{h}}}\\\\
{{\bm{a}}\choose {\bm{i}}_{n+1} }X^{{\bm{a}}-{\bm{i}}_{n+1}}
\end{pmatrix}_{\bm i_h,\bm a}\,\, 
\end{equation}
Note that $A(\bm X)$ has $\#(\mathcal G_s)+\sum_h \#(\mathcal G_{t_h})$ rows and $\#(\mathcal G_N)=\prod^n_{h=1}\lfloor d_h+1\rfloor$ columns, and moreover all the elements of the last $\#(\sG_{s})$ rows are monomials. Clearly we can always choose the parameters $s, t_1,\dots, t_n\in\mathbb R$ in order to obtain a matrix $A(\bm{X})$ with more rows than columns. Let's see an explicit condition that tells us when this can be achieved: the number of rows is greater than the number of columns if 
$$\#(\sG_{s})+\sum^n_{h=1}\#(\sG_{t_h})> \prod^n_{h=1}\lfloor d_h+1\rfloor
\,,$$
therefore thanks to Lemma \ref{Lemmepagina156}, for $d_1,\ldots, d_N$ very big, it is enough to have the following conditions on volumes: 
\begin{equation}\label{volume}
V(s)+\sum_{h=1}^{n}V(t_h)>1.
\end{equation}
% Notice that the rows are given by $n+1$ blocks of rows indexed respectively by  $\sC_{t_1}, \sC_{t_2},\dots,  \sC_{t_n}, \sC_{s}$. The colums are indexed by ${\bf{a}}_{n+1}\in \sC_{N}=\{0,\dots , \lfloor d_1\rfloor\}\times\{0,\dots , \lfloor d_2\rfloor\}\cdots\{0,\dots , \lfloor d_N\rfloor\}$ where $\lfloor x\rfloor\in\mathbb N$ is the biggest natural number least than $x\in\mathbb R$.%
The volumes $V(s),V(t_1),\ldots, V(t_n)$  heavily determine the algebraic properties of the matrix $A(\bm X)$, in fact we will now present a stronger condition on the quantity $V(s)+\sum_{h=1}^{n}V(t_h)$ ensuring that $A(\bm X)$ has maximal rank.
\begin{prop}\label{max_rank}
Let  $d_1,\ldots, d_N$ be big enough and let  $d_1> d_2>\ldots>d_N$. Moreover assume that 
\begin{equation*}
 \alpha^{(j)}_{h}\neq \alpha^{(j)}_{k},\,\, \forall\, j=1,\dots N,\,\,\, \forall\, h,k=1,\dots , n,\,\, h\neq k\,.
 \end{equation*}
If for $s, t_1,\dots, t_n\in\mathbb R$, with $0<s<1$ and $0< t_h<\frac{N}{2}$ 
 we have that
\begin{equation}\label{dyson_cond}
V(s)+\sum_{h=1}^{n}V(t_h)> \prod_{j=1}^{N-1} \left(1+(n-1)\sum_{i=j+1}^{N}\frac{d_i}{d_{j}}\right)
\end{equation}
then, for any  $\bm{\beta}=(\beta^{(1)},\beta^{(2)},\dots,\beta^{(N)})\in\mathbb K^N$ such that
$$
\beta^{(j)}\neq \alpha^{(j)}_{h}
\,,\,\, \forall\, j=1,\dots N,\,\,\, \forall\, h=1,\dots , n,\,,
$$
the rank of $A(\bm{\beta})$ is maximal and equal to the number of columns.
\end{prop}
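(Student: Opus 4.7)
The plan is to prove this by contradiction via Dyson's Lemma in the Esnault--Viehweg form. Suppose $A(\beta)$ does not have maximal column rank. Since we have arranged the parameters so that $A(\beta)$ has at least as many rows as columns, this rank deficiency produces a nonzero vector $c=(c_{\mathbf{a}})_{\mathbf{a}\in\sG_N}\in\mathbb{K}^{\#\sG_N}$ lying in the kernel. The polynomial $P(X):=\sum_{\mathbf{a}\in\sG_N}c_{\mathbf{a}}X^{\mathbf{a}}$ is then a nonzero element of $\mathbb{K}[X_1,\ldots,X_N]$ with $\deg_{X_j}P\leq d_j$ for every $j=1,\ldots,N$.

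The crucial step is to translate the equations $A(\beta)\cdot c=0$ block by block. Inspecting the matrix (\ref{interpolation matrix}), the rows indexed by $\mathbf{i}_h\in\sG_{t_h}$ (for $h=1,\ldots,n$) give exactly
$$\Delta^{\mathbf{i}_h}P(\alpha_h)\;=\;\sum_{\mathbf{a}\in\sG_N}c_{\mathbf{a}}{\mathbf{a}\choose \mathbf{i}_h}\alpha_h^{\mathbf{a}-\mathbf{i}_h}\;=\;0,$$
while the last block indexed by $\mathbf{i}_{n+1}\in\sG_s$, once we evaluate at $X=\beta$, gives $\Delta^{\mathbf{i}_{n+1}}P(\beta)=0$. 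By the very definition of the $d$-index, this yields
$$\Ind_{\alpha_h,d}(P)\;\geq\;t_h\quad(h=1,\ldots,n),\qquad \Ind_{\beta,d}(P)\;\geq\;s.$$

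At this point, the componentwise distinctness hypothesis on the $\alpha_h$'s together with the hypothesis $\beta^{(j)}\neq\alpha_h^{(j)}$ guarantees that $\alpha_1,\ldots,\alpha_n,\beta$ are $n+1$ mutually componentwise distinct points of $\mathbb{K}^N$. We are therefore in the exact setting of Dyson's Lemma (the Esnault--Viehweg generalisation in $N$ variables, asymptotic in the $d_j$), which says that for any nonzero polynomial with multi-degree bounded by $(d_1,\ldots,d_N)$ and with these lower bounds on its $d$-indices at the $n+1$ points, one has
$$V(s)+\sum_{h=1}^{n}V(t_h)\;\leq\;\prod_{j=1}^{N-1}\left(1+(n-1)\sum_{i=j+1}^{N}\frac{d_j}{d_{j+1}}\right).$$
This directly contradicts the strict inequality (\ref{dyson_cond}), so $A(\beta)$ must have maximal rank.

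The main obstacle will be invoking the correct version of Dyson's Lemma with the right constants: the Esnault--Viehweg form is needed because we are handling $n+1$ points in $N$ variables simultaneously, and one must verify that the decreasing condition $d_1>d_2>\cdots>d_N$ together with the assumption ``$d_1,\ldots,d_N$ big enough'' absorbs the asymptotic error terms that are usually present in the matrix counting (cf.\ Lemma \ref{Lemmepagina156}) and in the sharp form of Dyson's Lemma. A secondary care point is to check that the index bounds obtained really are $\geq t_h$ and $\geq s$ (not slightly smaller) because $\sG_{t_h}$ and $\sG_s$ are defined by a \emph{non-strict} inequality, which is precisely what matches the $\min$ in the definition of $\Ind$.
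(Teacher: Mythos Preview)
Your argument is correct and is exactly the approach the paper points to: the paper's proof is simply a reference to \cite[Proposition 2.1]{C}, whose proof proceeds, as you do, by assuming a nontrivial column relation, packaging it as a polynomial $P$ with the prescribed index lower bounds at the $n+1$ componentwise distinct points $\alpha_1,\ldots,\alpha_n,\beta$, and then contradicting the Esnault--Viehweg Dyson lemma \cite{EV}. Your remarks about the non-strict inequality in $\sG_{t_h},\sG_s$ and about the ``$d_j$ big enough'' hypothesis absorbing the asymptotic error from Lemma~\ref{Lemmepagina156} are precisely the checks one needs.
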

\proof See \cite[Proposition 2.1]{C} and notice that it uses a version of Dyson's lemma for polynomials in many variables proved in \cite{EV}.
\endproof
One can always assume that the parameters $s,t_1,\ldots,t_n$ are chosen in a way that we always get 
\begin{equation}\label{basicinequality1}
(1+\eta)^N<V(s)+\sum_{h=1}^{n}V(t_h)<1+2N\eta
\end{equation}
It is not difficult to see (check \cite[page 159]{C}) that equation (\ref{basicinequality1}) implies (\ref{dyson_cond}). Therefore thanks to  Proposition \ref{max_rank} we can extract from $ A(\bm X)$ a square submatrix $ M(\bm X)$ of dimension $$r:=\#(\mathcal G_N)=\prod^n_{h=1}\lfloor d_h+1\rfloor\,,$$ 
which is the number of columns of $ A(\bm X)$, such that $ M(\bm X)$ has maximal rank  for any $\bm\beta$ componentwise different from any $\bm\alpha_h$. Moreover one can choose $ M(\bm X)$ in a way that contains the last $\#(\mathcal G_s)$ rows of $A(\bm X)$, since they are linearly independent for any choice of $\bm \beta\in\mathbb K^N$. The polynomial $\delta(\bm X)$ is the determinant of the matrix $M(\bm X)$. At this point all the arguments used in \cite{C} to prove the bounds about $\delta$ can be applied verbatim in our setting with the only difference that in order to estimate $h_{\mathbb X}(\delta)$ one has to take the integral over $\Omega$ instead of the summation over all places.

\end{appendices}

\bibliographystyle{alpha}
\bibliography{roth_bib.bib}

\end{document}